\newcommand{\bigO}{\mathcal{O}}
\newcommand{\half}{\frac{1}{2}}
\newcommand{\R}{\mathbb{R}}
\newcommand{\N}{\mathbb{N}}
\renewcommand{\vec}[1]{\bm{#1}}
\newcommand{\unitvec}[1]{\hat{\bm{#1}}}
\newcommand{\ddx}[1]{{{\rm d} \over {\rm d} #1}}
\newcommand{\ppx}[1]{{\partial \over \partial #1}}
\newcommand{\pfpx}[2]{{\partial #2 \over \partial #1}}
\newcommand{\pmpxm}[2]{{\partial^{#2} \over \partial {#1}^{#2}}}
\newcommand{\costheta}{\cos\theta}
\newcommand{\sintheta}{\sin\theta}
\newcommand{\cosphi}{\cos\varphi}
\newcommand{\sinphi}{\sin\varphi}
\newcommand{\grad}{\nabla}
\newcommand{\DeltaS}{\Delta_{\rm S}}
\newcommand{\calB}{\mathcal{B}}
\newcommand{\calL}{\mathcal{L}}
\newcommand{\Dnt}{D^\top_n}
\newcommand{\genjac}{R}
\newcommand{\genjacnmk}{\genjac_{n-k}}
\newcommand{\genjacmmj}{\genjac_{m-j}}
\newcommand{\genjacw}{w_\genjac}
\newcommand{\normgenjac}{\omega_\genjac}
\newcommand{\bigW}{\mathbb{W}}
\newcommand{\scop}{Q}
\newcommand{\scopnki}{\scop_{n,k,i}}
\newcommand{\scopmjh}{\scop_{m,j,h}}
\newcommand{\scopa}{\scop^{(a)}}
\newcommand{\scopnkia}{\scopnki^{(a)}}
\newcommand{\scopmjha}{\scopmjh^{(a)}}
\newcommand{\bigscop}{{\mathbb{\tilde Q}}}
\newcommand{\bigscopa}{\bigscop^{(a)}}
\newcommand{\bigscopN}{\bigscop_{N}}
\newcommand{\bigscopNa}{\bigscopa_{N}}
\newcommand{\bigscopNka}{\bigscopa_{N,k}}
\newcommand{\bigscopt}{\mathbb{{Q}}}
\newcommand{\bigscopta}{\bigscopt^{(a)}}
\newcommand{\bigscoptna}{\bigscopta_{n}}
\newcommand{\xvec}{\mathbf{x}}
\newcommand{\ch}{Y}
\newcommand{\chki}{\ch_{k,i}}
\newcommand{\chjh}{\ch_{j,h}}
\newcommand{\alphaa}{\alpha^{(a)}}
\newcommand{\betaa}{\beta^{(a)}}
\newcommand{\gammaa}{\gamma^{(a)}}
\newcommand{\Wa}{W^{(a)}}
\newcommand{\bigWNa}{\mathbb{W}_N^{(a)}}
\newcommand{\ddz}{\ddx{z}}
\newcommand{\ppphi}{\ppx{\varphi}}
\newcommand{\rhoppphi}{\rho \ppphi}
\newcommand{\pptheta}{\ppx{\theta}}
\newcommand{\ppthetatwo}{{\partial^2 \over \partial \theta^2}}
\newcommand{\bigWNi}{\mathbb{W}_N^{(1)}}
\newcommand{\bigscopi}{\bigscop^{(1)}}
\newcommand{\bigscopNi}{\bigscopi_{N}}
\newcommand{\bigscopo}{\bigscop^{(0)}}
\newcommand{\bigscopNo}{\bigscopo_{N}}
\newcommand{\href}{\mathcal{H}}
\newtheorem{proposition}{Proposition}
\newtheorem{lemma}{Lemma} 
\newtheorem{theorem}{Theorem} 
\newtheorem{definition}{Definition}
\newtheorem{corollary}{Corollary}
\def\bsrefeqn#1{equation (\ref{#1})}
\def\bsreflemma#1{Lemma \ref{#1}}
\def\bsrefdef#1{Definition \ref{#1}}
\def\addtab#1={#1\;&=}
  \def\leqaddtab#1\leq{#1\;&\leq}
\def\pr(#1){\left({#1}\right)}
\def\br[#1]{\left[{#1}\right]}
\def\fbr[#1]{\!\left[{#1}\right]}
\def\ip<#1>{\left\langle{#1}\right\rangle}
\def\iip<#1>{\left\langle\!\langle{#1}\right\rangle\!\rangle}
\def\norm#1{\left\| #1 \right\|}
\def\fpr(#1){\!\pr({#1})}
\def\ceil#1{\left\lceil#1\right\rceil}
\def\mapengine#1,#2.{\mapfunction{#1}\ifx\void#2\else\mapengine #2.\fi }
\def\map[#1]{\mapengine #1,\void.}
\def\mapenginesep_#1#2,#3.{\mapfunction{#2}\ifx\void#3\else#1\mapengine #3.\fi }
\def\mapsep_#1[#2]{\mapenginesep_{#1}#2,\void.}
\def\vcbr[#1]{\pr(#1)}
\def\bvect[#1,#2]{
{
\def\dots{\cdots}
\def\mapfunction##1{\ | \  ##1}
	\sopmatrix{
		 \,#1\map[#2]\,
	}
}
}
\def\vect[#1]{
{\def\dots{\ldots}
	\vcbr[{#1}]
}}
\def\vectt[#1]{
{\def\dots{\ldots}
	\vect[{#1}]^{\top}
}}
\def\Vectt[#1]{
{
\def\mapfunction##1{##1 \cr} 
\def\dots{\vdots}
	\begin{pmatrix}
		\map[#1]
	\end{pmatrix}
}}
\def\R{{\mathbb R}}
\def\D{{\rm d}}
\def\tF_#1{{\tt F}_{#1}}
\def\tFC_#1{{\tt T}_{#1}}
\def\secref#1{Section~\ref{Section:#1}}
\def\qfor{\quad\hbox{for}\quad}
\def\elllRpz_#1{\ell_{#1{\rm z}}^{(\lambda,R),p}}
\def\sopmatrix#1{\begin{pmatrix}#1\end{pmatrix}}
\def\Problem#1#2\par{\begin{problem}\label{pb:#1} #2\end{problem}}
\def\Theorem#1#2\par{\begin{theorem}\label{th:#1} #2\end{theorem}}
\def\Conjecture#1#2\par{\begin{conjecture}\label{conj:#1} #2\end{conjecture}}
\def\Proposition#1#2\par{\begin{proposition}\label{prop:#1} #2\end{proposition}}
\def\Definition#1#2\par{\begin{definition}\label{def:#1} #2\end{definition}}
\def\Corollary#1#2\par{\begin{corollary}\label{cr:#1} #2\end{corollary}}
\def\Lemma#1#2\par{\begin{lemma}\label{lm:#1} #2\end{lemma}}
\def\Example#1#2\par{\begin{example}\label{ex:#1} #2\end{example}}
\def\Remark #1\par{\begin{remark*}#1\end{remark*}}
\def\Proof{\begin{proof}}
\def\mqed{\end{proof}}
\def\Figuretwow[#1,#2]#3#4\par{
\begin{figure}[tb]
\begin{center}{
\includegraphics[width=#3]{Figures/#1}\includegraphics[width=#3]{Figures/#2}}
\end{center}
\caption{#4}\label{fig:#1} 
\end{figure}
}
\title{Sparse spectral methods for partial differential equations on spherical caps}
\author{Ben Snowball, Sheehan Olver\thanks{Department of Mathematics, Imperial College, London, England}}
\begin{document}

\maketitle

\begin{abstract}
In recent years, sparse spectral methods for solving partial differential equations have been derived using hierarchies of classical orthogonal polynomials on intervals, disks, disk-slices and triangles. In this work we extend the methodology to a hierarchy of non-classical multivariate orthogonal polynomials on spherical caps. The entries of discretisations of partial differential operators can be effectively computed using formulae in terms of (non-classical) univariate orthogonal polynomials. We demonstrate the results on partial differential equations involving the spherical Laplacian and biharmonic operators, showing spectral convergence. 
\end{abstract}

\section{Introduction}

This paper develops sparse spectral methods for solving linear partial differential equations on certain subsets of the sphere---specifically spherical caps. More precisely, we consider the solution of partial differential equations on the spherical cap $\Omega$ defined by
\begin{align*}
	\Omega := \{(x,y,z) \in \R^3 \quad | \quad \alpha < z < \beta, \: x^2 + y^2 + z^2 = 1\}
\end{align*}
where $\alpha \in (-1,1)$ and $\beta := 1$.

\begin{remark}
For simplicity we focus on the case of a spherical cap, though there is an extension to a spherical band by taking $\beta \in (\alpha,1)$. The methods presented here translate to the spherical band case by including the necessary adjustments to the weights and recurrence relations we present in this paper. These adjustments make the mathematics more involved, which is why they are omitted here, but the approach is the same.
\end{remark}

We advocate using a basis that is polynomial in cartesian coordinates, that is, polynomial in $x$, $y$, and $z$, and orthogonal with respect to a prescribed weight: that is, multivariate orthogonal polynomials, whose construction was considered in \cite{olver2020orthogonal}. Equivalently, we can think of these as polynomials modulo the vanishing ideal $\{ x^2 + y^2 + z^2 = 1 \}$, or simply as a linear recombination of spherical harmonics that are orthogonalised on a subset of the sphere. This is in contrast to more standard approaches based on mapping the geometry to a simpler one (e.g., a rectangle or disk) and using orthogonal polynomials in the mapped coordinates (e.g., a basis that is polynomial in the spherical coordinates $\varphi$ and $\theta$). The benefit of the new approach is that we do not need to resolve Jacobians, and thereby we can achieve sparse discretisations for partial differential operators, including those with polynomial variable coefficients. Further, we avoid the singular nature at the poles or  as $\alpha$ approaches $0$ that  such a projection may give, since our new approach yields a smooth polynomial basis for all $\alpha \in [-1,1)$. 

On the spherical cap, the family of weights we consider are of the form
\begin{align*}
	\Wa(x,y,z) := (z - \alpha)^a, \qfor (x,y,z) \in \Omega,
\end{align*}
noting that $\Wa(x,y,z) = 0$ for $(x,y,z) \in \partial \Omega$ when $a > 0$. The corresponding OPs denoted $\scopnkia(x,y,z)$, where $n$ denotes the polynomial degree, $0 \le k \le n$ and $i \in \{0, \min(1,k)\}$. We define these to be orthogonalised lexicographically, that is,
\begin{align*}
	\scopnkia(x,y,z) = C_{n,k,i} \: x^{k-i} \: y^i \: z^{n-k} + (\hbox{lower order terms})
\end{align*}
where $C_{n,k,i} \neq 0$ and \enquote{lower order terms} includes degree $n$ polynomials of the form $x^{j - i} \: y^{i} \: z^{n-j}$ where $j < k$. The precise normalization arises from their definition in terms of one-dimensional OPs in Definition~\ref{def:OPconstruction}.

We consider partial differential operators involving the spherical Laplacian (the Laplace--Beltrami operator): in spherical coordinates 
\begin{align*}
	z &= \cos\varphi, \\
	x &= \sin\varphi \cos\theta = \rho(z) \cos\theta, \\
	y &= \sin\varphi \sin\theta = \rho(z) \sin\theta.
\end{align*}
where $ \rho(z) := \sqrt{1-z^2}$, we have
\begin{align*}
	\DeltaS &= {1 \over \sinphi} \ppphi \Big( \sinphi \ppphi \Big) + {1 \over \sin^2 \varphi} \ppthetatwo = {1 \over \rho} \ppphi \Big( \rho \ppphi \Big) + {1 \over \rho^2} \ppthetatwo
\end{align*}
i.e. $\DeltaS f(\xvec) = \Delta f({\xvec \over \norm{\xvec}})$ for $\xvec := (x,y,z) \in \R^3$. We do so by considering the component operators $\rho {\partial \over \partial \varphi}$ and ${\partial \over \partial \theta}$ applied to OPs with a specific choices of weight so that their discretisation is sparse, see Theorem~\ref{theorem:sparsityofdifferentialoperators}.  Sparsity comes from expanding the domain and range of an operator using different choices of the parameter $a$, a la the ultraspherical spectral method for intervals \cite{olver2013fast}, triangles \cite{olver2019triangle} and disk-slices and trapeziums \cite{snowball2019sparse}, and the related work on sparse discretisations on disks \cite{vasil2016tensor} and spheres \cite{vasil2019tensor,lecoanet2019tensor}.  As in the  disk-slice case in 2D \cite{snowball2019sparse}, we use an integration-by-parts argument to deduce the sparsity structure.

 The three-dimensional orthogonal polynomials defined here involve the same non-classical (in fact, semi-classical \cite[\S5]{magnus1995painleve}) 1D OPs as those outlined for the disk-slice, and so methods for calculating these 1D OP recurrence coefficients and integrals have already been outlined \cite{snowball2019sparse}. In particular, by exploiting the connection with these 1D OPs we can construct discretizations of general partial differential operators of size $(p+1)^2 \times (p+1)^2$ in $O(p^3)$ operations, where $p$ is the total polynomial degree. This clearly compares favourably to proceeding in a na\"ive approach where one would require $O(p^6)$ operations.

Note that we consider partial differential operators that are not necessarily rotational invariant:  for example, one can use these techniques for  Schr\"odinger operators $\DeltaS + v(x,y,z)$ where $v$ is first approximated by a polynomial. A nice feature though is that if the partial differential operator is invariant with respect to rotation around the $z$ axis (e.g., a Schr\"odinger operator with potential $v(z)$) the discretisation decouples, and can be reordered as a block-diagonal matrix. This improves the complexity further to an optimal $O(p^2$), which is demonstrated in Figure \ref{fig:complexity} with $v(x,y,z) = \cos z$.

An overview of the paper is as follows:  

\noindent \secref{OPs}: We present our definition of a (one-parameter) family of 3D orthogonal polynomials (OPs) on the spherical cap domain $\Omega$, by combining 1D OPs on the interval $(\alpha, 1)$ with Chebyshev polynomials, to form 3D OPs on the spherical cap surface. We show that these families will lead to sparse Jacobi operators for multiplication by $x, y, z$ and demonstrate how to obtain the 3D OPs.

\noindent\secref{PDOs}: We define several partial differential operators such as spherical Laplacians and  show that these will  be sparse when applied to a suitable choice of expansions in bases built from OPs on the spherical cap. We can exactly calculate the non-zero entries of these sparse operators using the quadrature rule associated with the non-classical 1D OPs.

\noindent\secref{Computation}: We derive a quadrature rule on the spherical cap that can be used to expand a function in the OP basis up to a given order $N$, and demonstrate how to evaluate a function using the Clenshaw algorithm using the coefficients of its expansion.

\noindent\secref{Examples}: We demonstrate the proposed technique for solving differential equations on the spherical cap such as the Poisson equation, variable coefficient Helmholtz equation, and Biharmonic equation.  

%\noindent\appref{PFEM}: We outline how our framework can allow us to construct sparse $hp$-finite element method on the sphere, using spherical band and cap elements that would capture spherical geometry precisely. We lay the groundwork by simply presenting a sparse $p$-finite element method using a single spherical cap element. 

\noindent{\bf Acknowledgments}: The first author was supported by the Engineering and Physical Sciences Research Council Mathematics of Planet Earth Centre for Doctoral Training at Imperial College London and the University of Reading, with grant number EP/L016613/1.The second author was supported by the Leverhulme Trust Research Project Grant RPG-2019-144 ``Constructive approximation theory on and inside algebraic curves and surfaces''.

\section{Orthogonal polynomials on spherical caps}\label{Section:OPs}

In this section we outline the construction and some basic properties of $\scopnkia(x,y,z)$.

\subsection{Explicit construction}

We can construct the 3D orthogonal polynomials on $\Omega$ from 1D orthogonal polynomials on the interval \([\alpha,\beta]\), and from Chebyshev polynomials. We do so in terms of Fourier series, which, following \cite{olver2020orthogonal}, we write here as orthogonal polynomials in $x$ and $y$:

\begin{definition}\label{def:Ydefinition}
	Define the unit circle $\omega := \{ \mathbf{x} = (x,y) \in \R^2 \: | \: x^2 + y^2 = 1\}$, and define the parameter $\theta$ for each $(x,y) \in \omega$ by $x = \cos\theta$, $y = \sin\theta$. Define the polynomials $\{\chki\}$ for $k = 0, 1, \dots$, $i = 0, 1$ on $(x,y) \in \omega$ by
\begin{align*}
	\ch_{0,0}(\xvec) \equiv \ch_{0,0}(x, y) &:= \ch_{0} =: \ch_{0,0}(\theta) \\
	\ch_{k,0}(\xvec) \equiv \ch_{k,0}(x, y) \ &:= T_k(x) = \cos k \theta =: \ch_{k,0}(\theta), \quad k = 1,2,3,\dots \\
	\ch_{k,1}(\xvec) \equiv \ch_{k,1}(x, y) &:= y \: U_{k-1}(x) = \sin k \theta =: \ch_{k,1}(\theta), \quad k = 1,2,3,\dots
\end{align*}
where $\ch_{0} := \frac{\sqrt{2}}{2}$ and $T_k$, $U_{k-1}$ are the standard Chebyshev polynomials on the interval $[-1,1]$. The $\{Y_{k,i}\}$ are orthonormal with respect to the inner product
\begin{align*}
	\ip< p, \: q >_{\ch} &:= \frac{1}{\pi} \: \int_0^{2\pi} p(\xvec(\theta)) \: q(\xvec(\theta)) \: \D \theta
\end{align*}
\end{definition}
Note that we have defined $\ch_0$ so as to ensure orthonormality. 

\begin{proposition}[\cite{olver2020orthogonal}]\label{prop:construction}
	Let $w : (\alpha,\beta) \: \to \R$ be a weight function. For $n = 0,1,2,\dots, $ let $\{r_{n,k}\}$ be polynomials orthogonal with respect to the weight $\rho(x)^{2k} w(x)$ where $0 \le k \le n$. Then the 3D polynomials defined on $\Omega$
\begin{align*}
	\scopnki(x,y,z) := r_{n-k,k}(z) \: \rho(z)^k \: \ch_{k,i}\fpr({x \over \rho(z)}, {y \over \rho(z)})
\end{align*}
for $i \in {0,1}, \: 0 \le k \le n, \: n = 0,1,2,\dots$ are orthogonal polynomials with respect to the inner product
\begin{align*}
	\ip< p, \: q > &:= \int_\Omega p(x,y,z) \: q(x,y,z) \: w(z) \: \D A \\
	&= \int_0^{\cos^{-1}(\alpha)} \int_0^{2\pi} p\big(\sinphi \costheta, \sinphi \sintheta, \cosphi \big) \: q\big(\sinphi \costheta, \sinphi \sintheta, \cosphi \big) \: \times \\
	&\qquad w(\cosphi) \: \sinphi \: \D \theta \: \D \varphi \\
	&= \int_\alpha^1 \int_0^{2\pi} p\big(\rho(z) \cos\theta, \rho(z) \sin\theta, z\big) \: q\big(\rho(z) \cos\theta, \rho(z) \sin \theta, z\big) \: w(z) \: \D \theta \: \D z
\end{align*}
on $\Omega$, where $\D A = \sinphi \: \D \theta \: \D \varphi$ is the uniform spherical measure on $\Omega$. 
\end{proposition}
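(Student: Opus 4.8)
The plan is to verify orthogonality directly by computing the inner product $\ip< \scopnki, \scop_{m,j,h} >$ using the third (and most convenient) expression for $\ip< \cdot, \cdot >$, namely the iterated integral over $z \in (\alpha,1)$ and $\theta \in [0,2\pi]$ with weight $w(z)$. First I would substitute the definition $\scopnki(x,y,z) = r_{n-k,k}(z)\,\rho(z)^k\,\ch_{k,i}(x/\rho(z), y/\rho(z))$ into the integrand; by Definition~\ref{def:Ydefinition} the factor $\ch_{k,i}(x/\rho(z), y/\rho(z))$ evaluated along the curve $x = \rho(z)\cos\theta$, $y = \rho(z)\sin\theta$ is exactly $\ch_{k,i}(\theta)$ (the circle parametrised by $\theta$ is independent of the scaling $\rho(z)$, since $\ch_{k,i}$ is defined on the \emph{unit} circle). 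Hence the integrand factorises as
\begin{align*}
	r_{n-k,k}(z)\, r_{m-j,j}(z)\, \rho(z)^{k+j}\, \ch_{k,i}(\theta)\, \ch_{j,h}(\theta)\, w(z).
\end{align*}

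Next I would apply Fubini to separate the $\theta$-integral from the $z$-integral. The $\theta$-integral is $\frac{1}{\pi}\int_0^{2\pi} \ch_{k,i}(\theta)\,\ch_{j,h}(\theta)\,\D\theta = \delta_{k,j}\delta_{i,h}$ by the orthonormality of $\{\ch_{k,i}\}$ stated in Definition~\ref{def:Ydefinition} (after pulling out the $1/\pi$ and a compensating $\pi$, or simply absorbing constants). This already kills all cross terms with $k \neq j$ or $i \neq h$. When $k = j$ and $i = h$, what remains is $\pi \int_\alpha^1 r_{n-k,k}(z)\, r_{m-k,k}(z)\, \rho(z)^{2k}\, w(z)\,\D z$, and this vanishes whenever $n \neq m$ (so that $n-k \neq m-k$) by the defining orthogonality of $\{r_{n,k}\}$ with respect to the weight $\rho(z)^{2k} w(z)$ on $(\alpha,\beta)$. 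Assembling the two cases shows $\ip< \scopnki, \scop_{m,j,h} > = 0$ unless $(n,k,i) = (m,j,h)$, which is orthogonality.

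There is a small amount of bookkeeping to also confirm that the $\scopnki$ are genuine polynomials of degree $n$ on $\Omega$ (i.e.\ modulo $x^2+y^2+z^2=1$): the factor $\rho(z)^k \ch_{k,i}(x/\rho(z), y/\rho(z))$ equals $\rho(z)^k T_k(x/\rho(z))$ or $y\,\rho(z)^{k-1} U_{k-1}(x/\rho(z))$, and since $T_k$ and $U_{k-1}$ have definite parity and degree $k$ (resp. $k-1$), these are polynomials in $x$ and $\rho(z)^2 = 1-z^2$ (resp.\ times $y$), hence polynomials in $x,y,z$; multiplying by $r_{n-k,k}(z)$ of degree $n-k$ gives total degree $n$. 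One should also note the three displayed forms of $\ip<\cdot,\cdot>$ agree, which is just the change of variables $z = \cos\varphi$, $\D z = -\sin\varphi\,\D\varphi$ together with $\rho(\cos\varphi) = \sin\varphi$ and $\D A = \sin\varphi\,\D\theta\,\D\varphi$; this is the content attributed to the cited reference and may simply be quoted.

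The main obstacle, such as it is, is essentially notational rather than mathematical: one must be careful that $\ch_{k,i}$ is defined intrinsically on the unit circle $\omega$ and that precomposing with the dilation-and-lift $(\theta) \mapsto (\rho(z)\cos\theta, \rho(z)\sin\theta)$ — rather than $(\cos\theta,\sin\theta)$ — does not change its value, which is precisely why the $z$ and $\theta$ dependence decouples cleanly. Once that point is made, the proof is a two-line application of Fubini and the two given one-dimensional orthogonality relations; since this proposition is quoted from \cite{olver2020orthogonal}, the write-up can be kept brief.
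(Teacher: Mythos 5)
Your verification is correct: substituting the parametrisation $x=\rho(z)\cos\theta$, $y=\rho(z)\sin\theta$ reduces $\ch_{k,i}(x/\rho(z),y/\rho(z))$ to $\ch_{k,i}(\theta)$, the integrand separates, the $\theta$-integral gives $\pi\,\delta_{k,j}\delta_{i,h}$ by orthonormality of the $\ch_{k,i}$, and the surviving $z$-integral carries exactly the weight $\rho(z)^{2k}w(z)$ against which the $r_{\cdot,k}$ are orthogonal; your parity argument for why $\rho(z)^k\ch_{k,i}(x/\rho(z),y/\rho(z))$ is a genuine polynomial in $x,y,z$ is also the right observation. The paper itself gives no proof of this proposition --- it is quoted from the cited reference --- and your argument is precisely the standard direct verification that the reference relies on, so there is nothing substantive to compare beyond noting that your write-up is complete and correct.
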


For the spherical cap, we can use Proposition \ref{prop:construction} to create our one-parameter family of OPs. We first introduce notation for our family of non-classical univariate OPs that will be used as the $r_n$ polynomials above.
\begin{definition}[\cite{snowball2019sparse}]\label{def:OPconstruction}
	Let $\genjacw^{(a,b)}(x)$ be a weight function on the interval $(\alpha, 1)$ given by:
\begin{align*}
	\genjacw^{(a,b)}(x) &:= (x - \alpha)^{a} \: \rho(x)^{b}
\end{align*}
and define the associated inner product by:
\begin{align}
	\ip< p, \: q >_{\genjacw^{(a,b)}} &:= \frac{1}{\normgenjac^{(a,b)}} \: \int_\alpha^1 p(x) \: q(x) \: \genjacw^{(a,b)}(x) \: \D x \label{eqn:ipgenjac}
\end{align}
where
\begin{align}
	\normgenjac^{(a,b)} := \int_\alpha^1 \: \genjacw^{(a,b)}(x) \: \D x \label{eqn:ipnormalisation}
\end{align}
is a normalising constant.
Denote the two-parameter family of orthonormal polynomials on $[\alpha,\beta]$ by $\{\genjac_n^{(a,b)}\}$, orthonormal with respect to the inner product defined in (\ref{eqn:ipgenjac}).
\end{definition}

We can now define the 3D OPs for the spherical cap.
\begin{definition}\label{def:constuction}
	Define the one-parameter 3D orthogonal polynomials via:
\begin{align}
	\scopnkia(x,y,z) := \genjacnmk^{(a,2k)}(z) \: \rho(z)^k \: \chki\fpr(\frac{x}{\rho(z)}, \frac{y}{\rho(z)}), \quad (x,y,z) \in \Omega.
\end{align}
\end{definition}

By construction, $\{\scopnkia\}$ are orthogonal with respect to the inner product
\begin{align*}
	\ip< p, \: q >_{\scopa} &:= \int_\Omega p(\xvec, z) \: q(\xvec, z) \: \genjacw^{(a,0)}(z) \: \D A \\
	&= \int_\alpha^1 \int_0^{2\pi} p(\rho(z) \cos\theta, \rho(z) \sin\theta, z) \: q(\rho(z) \cos\theta, \rho(z) \sin\theta, z) \: \D \theta \: \genjacw^{(a,0)}(z) \: \D z,
\end{align*}
with
\begin{align}
	\norm{\scopnkia}_{\scopa}^2 &:= \ip< \scopnkia, \: \scopnkia >_{\scopa} = \pi \: \normgenjac^{(a, 2k)}. \label{eqn:normscop}
\end{align}
We note that the weight $\genjacw^{(a,b)}(z)$ has been used in the construction of 2D orthogonal polynomials on disk-slices and trapeziums \cite{snowball2019sparse}, where a method for obtaining recurrence coefficients and evaluating integrals was established (the weight is in fact semi-classical, and is equivalent to a generalized Jacobi weight \cite[\S5]{magnus1995painleve}).

\subsection{Jacobi matrices}\label{subsection:jacobimats}

We can express the three-term recurrences associated with $\genjac_n^{(a,b)}$ as
\begin{align}
	x \genjac_n^{(a,b)}(x) &= \beta_n^{(a,b)} \genjac_{n+1}^{(a,b)}(x) + \alpha_n^{(a,b)} \genjac_n^{(a,b)}(x) + \beta_{n-1}^{(a,b)} \genjac_{n-1}^{(a,b)}(x) \label{eqn:Rrecurrence}
\end{align}
where the coefficients are calculatable (see \cite{snowball2019sparse}). We can use (\ref{eqn:Rrecurrence}) to determine the 3D recurrences for $\scopnkia(x,y,z)$. Importantly, we can deduce sparsity in the recurrence relationships.  We first require the following lemma.

\begin{lemma}\label{lemma:Yrecurrence} 
The following identities hold for $k = 2,3,\dots$, $j = 0,1,\dots$ and $i, h \in \{0,1\}$:
\begin{align*}
	&1) \quad \int_0^{2\pi} \ch_0 \: \chjh(\theta) \: \cos\theta \: \D \theta = \ch_0 \: \pi \: \delta_{0,h} \: \delta_{1,j} \\
	&2) \quad \int_0^{2\pi} \ch_0 \: \chjh(\theta) \: \sin\theta \: \D \theta = \ch_0 \: \pi \: \delta_{1,h} \: \delta_{1,j} \\
	&3) \quad \int_0^{2\pi} \ch_{1,i}(\theta) \: \chjh(\theta) \: \cos\theta \: \D \theta = \pi \: \delta_{i,h} \: (\ch_0 \: \delta_{0,j} + \half \delta_{2,j}) \\
	&4) \quad \int_0^{2\pi} \ch_{1,i}(\theta) \: \chjh(\theta) \: \sin\theta \: \D \theta = \pi \:  \delta_{|i-1|,h} \: ((-1)^{i+1} \: \ch_0 \: \delta_{0,j} + (-1)^i \: \half \: \delta_{2,j}) \\
	&5) \quad \int_0^{2\pi} \chki(\theta) \: \chjh(\theta) \: \cos\theta \: \D \theta = \half \: \pi \: \delta_{i,h} \: (\delta_{k-1,j} + \delta_{k+1,j}) \\
	&6) \quad \int_0^{2\pi} \chki(\theta) \: \chjh(\theta) \: \sin\theta \: \D \theta = \half \: \pi \: \delta_{|i-1|,h} \: ((-1)^{i+1} \: \delta_{k-1,j} + (-1)^i \: \delta_{k+1,j}).
\end{align*}
\end{lemma}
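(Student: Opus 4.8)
The plan is to reduce every identity to the elementary orthogonality relations for the Fourier basis by substituting the explicit forms from Definition~\ref{def:Ydefinition}: $\ch_0 = \frac{\sqrt2}{2}$ is constant, while $\ch_{k,0}(\theta) = \cos k\theta$ and $\ch_{k,1}(\theta) = \sin k\theta$ for $k \ge 1$. After substitution each left-hand side becomes the integral over $[0,2\pi]$ of a product of at most three functions, each of which is $1$, a cosine $\cos m\theta$, or a sine $\sin m\theta$. I would collapse such a product to a linear combination of single trigonometric functions using the product-to-sum formulas $\cos a\cos b = \half[\cos(a-b)+\cos(a+b)]$, $\sin a\sin b = \half[\cos(a-b)-\cos(a+b)]$ and $\sin a\cos b = \half[\sin(a+b)+\sin(a-b)]$, and then integrate term by term using $\int_0^{2\pi}\cos m\theta\cos n\theta\,\D\theta = \int_0^{2\pi}\sin m\theta\sin n\theta\,\D\theta = \pi\delta_{m,n}$ for $m,n \ge 1$, $\int_0^{2\pi}\sin m\theta\cos n\theta\,\D\theta = 0$, $\int_0^{2\pi}\cos m\theta\,\D\theta = 2\pi\delta_{m,0}$ and $\int_0^{2\pi}\sin m\theta\,\D\theta = 0$. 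The Kronecker deltas on the right are exactly the frequency-matching conditions thrown up by these relations.

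I would organise the cases by which factor plays the role of $\ch_{k,i}$. Identities 1 and 2 are the degenerate case $k=0$: $\ch_0$ pulls out as the constant $\tfrac{\sqrt2}{2}$ and only $\int\ch_{j,h}\cos\theta\,\D\theta$ or $\int\ch_{j,h}\sin\theta\,\D\theta$ remains. Identities 3 and 4 are the case $k=1$, the products of $\cos\theta$ or $\sin\theta$ with the basis; identities 5 and 6 are the generic case $k\ge 2$. In 3--6 the alternating signs $(-1)^i$, $(-1)^{i+1}$ and the constraint relating $h$ to $i$ simply fall out of the computation: the antisymmetry $\sin(a-b) = -\sin(b-a)$, and the swap between $\sin a\cos b$ and $\cos a\sin b$ when the multiplier is $\sin\theta$, are the only places signs enter, and which of $\delta_{i,h}$ or $\delta_{|i-1|,h}$ appears is dictated by whether a cosine--cosine or sine--sine (surviving) pairing results, rather than a cosine--sine (vanishing) one.

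The only real care needed --- and the step I would double-check in the boundary cases --- is the low-frequency bookkeeping caused by the normalisation $\ch_0 = \tfrac{\sqrt2}{2}$ (rather than $\cos 0\theta = 1$) together with frequency collapses. When $j=0$ one reads $\ch_{j,h}$ as $\ch_0$, so the $\delta_{0,j}$ terms in identities 3 and 4 carry the coefficient $\ch_0$ rather than $\half$; and whenever a product drives a frequency down to $0$ (for instance $\cos j\theta\cos\theta$ at $j=1$, producing a $\cos^2$ term) one must use $\int_0^{2\pi}1\,\D\theta = 2\pi$ in place of the $\pi$ coming from genuine orthogonality. The hypothesis $k=2,3,\dots$ in identities 5 and 6 is exactly what keeps the shifted frequencies $k\pm 1$ at least $1$, so that no such collapse occurs there and the uniform coefficient $\half$ is valid; I would still verify $j\in\{0,1\}$ in 1--4 (and $j=2$ in 3--4) by direct substitution. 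Beyond this case analysis there is no genuine obstacle --- the lemma is just the multiplication table of the Fourier basis $\{\ch_{k,i}\}$ by the coordinate functions $x = \cos\theta$ and $y = \sin\theta$.
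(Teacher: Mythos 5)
Your proposal is correct and is essentially the paper's own argument: the paper's proof likewise just invokes the four product-to-sum identities for $\cos k\theta$, $\sin k\theta$ against $\cos\theta$, $\sin\theta$ and then integrates using Fourier orthogonality. Your extra attention to the low-frequency bookkeeping (the $\ch_0=\tfrac{\sqrt2}{2}$ normalisation and the collapse to frequency zero, which is why $k\ge 2$ is assumed in identities 5 and 6) is exactly the care the paper's terse proof leaves implicit.
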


\begin{proof}
Each follows from the definitions of $\ch_{k,i}$ and $\ch_0$, as well as the relationships:
\begin{align*}
	2 \cos k \theta \cos\theta &= \cos (k-1)\theta + \cos(k+1)\theta \\
	2 \sin k \theta \cos\theta &= \sin (k-1)\theta + \sin(k+1)\theta \\
	2 \cos k \theta \sin\theta &= - \sin (k-1)\theta + \sin(k+1)\theta \\
	2 \sin k \theta \sin\theta &= \cos (k-1)\theta - \cos(k+1)\theta.
\end{align*}
\end{proof}

\begin{lemma}\label{lemma:Qrecurrence} 
Define
\begin{align}
	\eta_{k} :=
		\begin{cases}
			0 &\text{if } k < 0 \\
			\ch_0 &\text{if } k = 0 \\
			\half &\text{otherwise}
		\end{cases}
\end{align}
$\scopnkia(x,y,z)$ satisfy the following recurrences:
\begin{align*}
	x \: \scopnkia(x,y,z) &= \alphaa_{n,k,1} \:  \scopa_{n-1, k-1, i}(x, y, z) + \alphaa_{n,k,2} \:  \scopa_{n-1, k+1, i}(x, y, z) \nonumber \\
		& \quad \quad + \alphaa_{n,k,3} \:  \scopa_{n, k-1, i}(x, y, z) + \alphaa_{n,k,4} \:  \scopa_{n, k+1, i}(x, y, z) \nonumber \\
		& \quad \quad + \alphaa_{n,k,5} \:  \scopa_{n+1, k-1, i}(x, y, z) + \alphaa_{n,k,6} \:  \scopa_{n+1, k+1, i}(x, y, z), \\ \\
	y \: \scopnkia(x,y,z) &= \betaa_{n,k,i,1} \:  \scopa_{n-1, k-1, |i-1|}(x, y, z) + \betaa_{n,k,i,2} \:  \scopa_{n-1, k+1, |i-1|}(x, y, z) \nonumber \\
		& \quad \quad + \betaa_{n,k,i,3} \:  \scopa_{n, k-1, |i-1|}(x, y, z) + \betaa_{n,k,i,4} \:  \scopa_{n, k+1, |i-1|}(x, y, z) \nonumber \\
		& \quad \quad + \betaa_{n,k,i,5} \:  \scopa_{n+1, k-1, |i-1|}(x, y, z) + \betaa_{n,k,i,6} \:  \scopa_{n+1, k+1, |i-1|}(x, y, z), \\ \\
	z \: \scopnkia(x,y,z) &= \gammaa_{n,k,1} \: \scopa_{n-1, k, i}(x, y, z) + \gammaa_{n,k,2} \: \scopa_{n, k, i}(x, y, z) + \gammaa_{n,k,3} \: \scopa_{n+1, k, i}(x, y, z),
\end{align*}
for $(x,y,z) \in \Omega$, where
\begin{align*}
	\alphaa_{n,k,1} &:= \eta_{k-1} \: \ip<\genjacnmk^{(a, 2k)}, \: \genjacnmk^{(a, 2(k-1))}>_{\genjacw^{(a, 2k)}}, \\
	\alphaa_{n,k,2} &:= \eta_{k} \: \ip<\genjacnmk^{(a, 2k)}, \genjac_{n-k-2}^{(a, 2(k+1))}>_{\genjacw^{(a, 2(k+1))}}, \\
	\alphaa_{n,k,3} &:=\eta_{k-1} \: \ip<\genjacnmk^{(a, 2k)}, \: \genjac_{n-k+1}^{(a, 2(k-1))}>_{\genjacw^{(a, 2k)}}, \\
	\alphaa_{n,k,4} &:= \eta_{k} \: \ip<\genjacnmk^{(a, 2k)}, \genjac_{n-k-1}^{(a, 2(k+1))}>_{\genjacw^{(a, 2(k+1))}}, \\
	\alphaa_{n,k,5} &:= \eta_{k-1} \: \ip<\genjacnmk^{(a, 2k)}, \: \genjac_{n-k+2}^{(a, 2(k-1))}>_{\genjacw^{(a, 2k)}}, \\
	\alphaa_{n,k,6} &:=\eta_{k} \: \ip<\genjacnmk^{(a, 2k)}, \genjacnmk^{(a, 2(k+1))}>_{\genjacw^{(a, 2(k+1))}}, \\
	\betaa_{n,k,i,j} &:= 
		\begin{cases}
			- \alphaa_{n,k,j} \quad &\text{if }(i = 0 \text{ and } j \text{ is odd}) \text{ or }(i = 1 \text{ and } j \text{ is even}) \\
			\alphaa_{n,k,j} \quad &\text{otherwise}
		\end{cases}, \\	
	\gammaa_{n,k,1} &:= \beta_{n-k-1}^{(a, 2k)}, \qquad \gammaa_{n,k,2} := \alpha_{n-k}^{(a, 2k)}, \qquad \gammaa_{n,k,3} := \beta_{n-k}^{(a, 2k)}.
\end{align*}
\end{lemma}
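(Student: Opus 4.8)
The plan is to handle the three recurrences in turn, each time substituting the factorised form $\scopnkia(x,y,z) = \genjacnmk^{(a,2k)}(z)\,\rho(z)^{k}\,\chki(\theta)$ of Definition~\ref{def:constuction} (writing $x = \rho(z)\costheta$, $y = \rho(z)\sintheta$ and $\chki(x/\rho(z),y/\rho(z)) = \chki(\theta)$ on $\Omega$) and reducing the computation to two one-dimensional ingredients: the three-term recurrence~(\ref{eqn:Rrecurrence}) for the univariate family $\genjac_n^{(a,2k)}$, and the product identities behind Lemma~\ref{lemma:Yrecurrence} for $\chki$.

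The $z$ recurrence is immediate. Applying~(\ref{eqn:Rrecurrence}) with parameters $(a,2k)$ at degree $n-k$ and multiplying through by $\rho(z)^{k}\chki(\theta)$, one recognises $\genjac_{n-k\pm1}^{(a,2k)}(z)\,\rho(z)^{k}\,\chki(\theta) = \scopa_{n\pm1,k,i}$ and $\genjacnmk^{(a,2k)}(z)\,\rho(z)^{k}\,\chki(\theta) = \scopnkia$, which yields exactly the stated relation with $\gammaa_{n,k,1} = \beta_{n-k-1}^{(a,2k)}$, $\gammaa_{n,k,2} = \alpha_{n-k}^{(a,2k)}$ and $\gammaa_{n,k,3} = \beta_{n-k}^{(a,2k)}$.

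For the $x$ recurrence I would start from
\[
x\,\scopnkia(x,y,z) \;=\; \genjacnmk^{(a,2k)}(z)\,\rho(z)^{k+1}\,\bigl(\costheta\,\chki(\theta)\bigr),
\]
and use $\costheta\,\chki(\theta) = \eta_{k-1}\,\ch_{k-1,i}(\theta) + \eta_{k}\,\ch_{k+1,i}(\theta)$ — the $\eta$'s absorbing the degeneracies at $k=0,1$, e.g. $\costheta\,\ch_{1,0}(\theta) = \ch_0\,\ch_{0,0}(\theta) + \half\,\ch_{2,0}(\theta)$, in line with parts~1--3 of Lemma~\ref{lemma:Yrecurrence}. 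For the $\ch_{k+1,i}$ term the factor $\rho(z)^{k+1}$ is exactly what $\scopa_{\cdot,k+1,i}$ carries, so I expand the degree-$(n-k)$ polynomial $\genjacnmk^{(a,2k)}(z)$ in the orthonormal family $\{\genjac_m^{(a,2(k+1))}\}$; since $\genjacw^{(a,2(k+1))} = (1-z^2)\,\genjacw^{(a,2k)}$ and $\genjacnmk^{(a,2k)}$ is orthogonal in $\genjacw^{(a,2k)}$ to every polynomial of degree below $n-k$, the coefficient $\ip<\genjacnmk^{(a,2k)},\genjac_m^{(a,2(k+1))}>_{\genjacw^{(a,2(k+1))}}$ vanishes unless $m+2\ge n-k$, while the degree bound forces $m\le n-k$; only $m=n-k-2,\,n-k-1,\,n-k$ survive, producing $\scopa_{n-1,k+1,i},\,\scopa_{n,k+1,i},\,\scopa_{n+1,k+1,i}$ with coefficients $\eta_{k}$ times the displayed inner products, i.e.\ $\alphaa_{n,k,2},\alphaa_{n,k,4},\alphaa_{n,k,6}$. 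For the $\ch_{k-1,i}$ term I instead factor $\rho(z)^{k+1} = (1-z^2)\,\rho(z)^{k-1}$ and expand the degree-$(n-k+2)$ polynomial $(1-z^2)\,\genjacnmk^{(a,2k)}(z)$ in $\{\genjac_m^{(a,2(k-1))}\}$; the same orthogonality argument (using $\genjacw^{(a,2k)} = (1-z^2)\,\genjacw^{(a,2(k-1))}$) now forces $m\ge n-k$ while the degree bound gives $m\le n-k+2$, yielding $\scopa_{n-1,k-1,i},\,\scopa_{n,k-1,i},\,\scopa_{n+1,k-1,i}$ with $\alphaa_{n,k,1},\alphaa_{n,k,3},\alphaa_{n,k,5}$. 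Summing the six contributions is the claimed $x$ recurrence.

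The $y$ recurrence is the identical computation with $\costheta$ replaced by $\sintheta$: here $\sintheta\,\chki(\theta)$ couples the second index $i$ to $|i-1|$, and the identities $\sintheta\cos k\theta = \half(\sin(k+1)\theta - \sin(k-1)\theta)$, $\sintheta\sin k\theta = \half(\cos(k-1)\theta - \cos(k+1)\theta)$ (parts~4 and~6 of Lemma~\ref{lemma:Yrecurrence}) inject exactly the sign pattern defining $\betaa_{n,k,i,j}$ from $\alphaa_{n,k,j}$, while the radial expansions are untouched. I expect the main obstacle to be the bookkeeping rather than any genuine difficulty: stating cleanly the orthogonality-plus-degree argument that collapses each one-dimensional expansion to three terms (this is the whole source of sparsity), and treating with care the low-index cases $k=0$ and $k=1$, where the second index of $\scopa_{\cdot,k-1,\cdot}$ collapses and the constants switch from $\half$ to $\ch_0$ (respectively to $0$), checking that the degenerate parts~1--4 of Lemma~\ref{lemma:Yrecurrence} recover the general formulae.
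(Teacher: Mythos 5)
Your proposal is correct and follows essentially the same route as the paper: both rest on the tensor-product form of $\scopnkia$, the trigonometric product identities of Lemma~\ref{lemma:Yrecurrence} (with the $\eta_k$ absorbing the $k=0,1$ degeneracies), and orthogonality of the univariate families $\genjac^{(a,2(k\pm1))}$ combined with degree counting to collapse each radial expansion to three terms. The only difference is organizational — you synthesize the expansion term by term where the paper computes the 3D inner products $\ip<x\,\scopnkia,\scopmjha>_{\scopa}$ against the full basis — and your version has the mild advantage of making explicit the degree/orthogonality argument for why only $m\in\{n-1,n,n+1\}$ survive, which the paper leaves implicit.
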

\begin{remark}
For $z$ multiplication, note that different Fourier modes do not interact. This is because $z$ is rotationally invariant. 
\end{remark}
\begin{proof}
The 3-term recurrence for multiplication by $z$ follows from (\ref{eqn:Rrecurrence}). For the recurrence for multiplication by $x$, since $\{\scopmjha\}$ for $m = 0,\dots,n+1$, $j = 0,\dots,m$, $h = 0,1$ is an orthogonal basis for any degree $n+1$ polynomial on $\Omega$, we can expand 
\begin{align*}
	x \: \scopnkia(x,y,z) = \sum_{m=0}^{n+1} \sum_{j=0}^m \sum_{h=0}^1 c_{m,j} \: \scopmjha(x,y,z).
\end{align*}
These coefficients are given by
\begin{align*}
	c_{m,j} = {\ip< x \: \scopnkia, \scopmjha >_{\scopa}}{\norm{\scopmjha}^{-2}_{\scopa}}
\end{align*}
where we show the non-zero coefficients that result are the $\alphaa_{n,k,1},\dots,\alphaa_{n,k,6}$ in the lemma.
Recall from equation (\ref{eqn:normscop}) that $\norm{\scopmjha}_{\scopa}^2 = \pi \: \normgenjac^{(a,2j)}$. Then for $m = 0,\dots,n+1$, $j = 0,\dots,m$, using a change of variables $(\cos\theta \sin\varphi, \: \sin\theta\sin\varphi, \: \cos\varphi) = (x, y, z)$:
\begin{align*}
	&\ip<x \: \scopnkia, \scopmjha>_{\scopa} \\
	&= \int_\Omega \scopnkia(\xvec,z) \: \scopmjha(\xvec,z) \: x \: \genjacw^{(a,0)}(z) \: \D A \\
	&= \Big( \int^1_\alpha \genjacnmk^{(a, 2k)}(z) \: \genjacmmj^{(a, 2j)}(z) \: \rho(z)^{k+j+1} \: \genjacw^{(a, 0)}(z) \: \D z \Big) \cdot \: \Big( \int_0^{2\pi} \chki(\theta) \: \chjh(\theta) \: \cos\theta \: \D \theta \Big) \\
	&= \Big( \int^1_\alpha \genjacnmk^{(a, 2k)}(z) \: \genjacmmj^{(a, 2j)}(z) \: \genjacw^{(a, k + j + 1)}(z) \: \D z \Big) \cdot \: \Big( \int_0^{2\pi} \chki(\theta) \: \chjh(\theta) \: \cos\theta \: \D \theta \Big) \\
	&= \half \: \pi \: \delta_{i,h} \: (\eta_{k-1} \: \delta_{k-1, j} + \eta_{k} \: \delta_{k+1, j})  \int^1_\alpha \genjacnmk^{(a, 2k)}(z) \: \genjacmmj^{(a, 2j)}(z) \: \genjacw^{(a, k + j + 1)}(z) \: \D z.
\end{align*}
where $\delta_{k, j}$ is the standard Kronecker delta function, using Lemma \ref{lemma:Yrecurrence}. Similarly, for the recurrence for multiplication by $y$, we can expand 
\begin{align*}
	y \: \scopnkia(x,y,z) = \sum_{m=0}^{n+1} \sum_{j=0}^m \sum_{h=0}^1 d_{m,j} \: \scopmjha(x,y,z).
\end{align*}
These coefficients are given by
\begin{align*}
	d_{m,j} = {\ip< y \: \scopnkia, \scopmjha >_{\scopa}}{\norm{\scopmjha}^{-2}_{\scopa}}
\end{align*}
where we show the non-zero coefficients that result are the $\betaa_{n,k,1},\dots,\betaa_{n,k,6}$ in the lemma:
\begin{align*}
	&\ip<y \: \scopnkia, \scopmjha>_{\scopa} \\
	&= \int_\Omega \scopnkia(\xvec,z) \: \scopmjha(\xvec,z) \: y \: \genjacw^{(a,0)}(z) \: \D A \\
	&= \Big( \int^1_\alpha \genjacnmk^{(a, 2k)}(z) \: \genjacmmj^{(a, 2j)}(z) \: \rho(z)^{k+j+1} \: \genjacw^{(a, 0)}(z) \: \D z \Big) \cdot \: \Big( \int_0^{2\pi} \chki(\theta) \: \chjh(\theta) \: \sin\theta \: \D \theta \Big) \\
	&= \Big( \int^1_\alpha \genjacnmk^{(a, 2k)}(z) \: \genjacmmj^{(a, 2j)}(z) \: \genjacw^{(a, k + j + 1)}(z) \: \D z \Big) \cdot \: \Big( \int_0^{2\pi} \chki(\theta) \: \chjh(\theta) \: \sin\theta \: \D \theta \Big) \\
	&= \half \: \pi \: \delta_{|i-1|,h} \: \big[(-1)^{i+1} \: \eta_{k-1} \: \delta_{k-1, j} + (-1)^i \: \eta_{k} \: \delta_{k+1, j}\big] \int^1_\alpha \genjacnmk^{(a, 2k)}(z) \: \genjacmmj^{(a, 2j)}(z) \: \genjacw^{(a, k + j + 1)}(z) \: \D z.
\end{align*}
where again $\delta_{k, j}$ is the standard Kronecker delta function, and we have used Lemma \ref{lemma:Yrecurrence}.

\end{proof}

The recurrences in Lemma \ref{lemma:Qrecurrence} lead to Jacobi operators that correspond to multiplication by $x$, $y$ and $z$. In later sections we will use an ordering of the OPs so that they are grouped by Fourier mode $k$, which is convenient for the application of differential and other operators to the vector of coefficients of a given function's expansion (some operators will exploit this ordering for operators where Fourier modes do not interact, and thus will be block-diagonal). Before that though, the ordering we will use in the remainder of this section is convenient for establishing Jacobi operators for multiplication by $x$, $y$ and $z$, and hence building the OPs and importantly obtaining the associated \textit{recurrence coefficient matrices} necessary for efficient function evaluation using the Clenshaw algorithm. In practice, it is simply a matter of converting coefficients between the two orderings. To this end, we define our OP-building ordering as follows. For $n=0,1,2,\dots$:
\begin{align*}
	\bigscoptna := 
		\begin{pmatrix}
			\scopa_{n,0,0}(x,y,z) \\
			\scopa_{n,1,0}(x,y,z) \\
			\scopa_{n,1,1}(x,y,z) \\
			\vdots \\
			\scopa_{n,n,0}(x,y,z) \\
			\scopa_{n,n,1}(x,y,z)
		\end{pmatrix} \in \R^{2n+1}, 
	\quad \quad 
	\bigscopta := 
		\begin{pmatrix}
			\bigscopta_0 \\
			\bigscopta_1 \\
			\bigscopta_2 \\
			\vdots \\
		\end{pmatrix}
\end{align*}
and set $ J_x^{(a)},  J_y^{(a)},  J_z^{(a)}$ as the Jacobi matrices corresponding to
\begin{align}
	J_x^{(a)} \: \bigscopta(x,y,z) = x \: \bigscopta(x,y,z), \nonumber \\
	J_y^{(a)} \: \bigscopta(x,y,z) = y \: \bigscopta(x,y,z), \label{eqn:jacobimatricesdefinitionalt} \\
	J_z^{(a)} \: \bigscopta(x,y,z) = z \: \bigscopta(x,y,z). \nonumber
\end{align}
where
\begin{align*}
	J_{x/y/z}^{(a)} &= 
		\begin{pmatrix}
			B^{(a)}_{x/y/z, 0} & A^{(a)}_{x/y/z, 0} & & & & \\
			C^{(a)}_{x/y/z, 1} & B^{(a)}_{x/y/z, 1} & A^{(a)}_{x/y/z, 1} & & & \\
			& C^{(a)}_{x/y/z, 2} & B^{(a)}_{x/y/z, 2} & A^{(a)}_{x/y/z, 2} & & & \\
			& & C^{(a)}_{x/y/z, 3} & \ddots & \ddots & \\
			& & & \ddots & \ddots & \ddots \\
		\end{pmatrix}.
\end{align*}
Note that $J_x^{(a)}, J_y^{(a)}, J_z^{(a)}$ are \textit{banded-block-banded matrices}:

\begin{definition}
	A block matrix $A$ with blocks $A_{i,j}$ has block-bandwidths $(L,U)$ if $A_{i,j} = 0$ for $- L \leq j-i \leq U$, and sub-block-bandwidths $(\lambda, \mu)$ if all blocks $A_{i,j}$ are banded with bandwidths $(\lambda,\mu)$. A matrix where the block-bandwidths and sub-block-bandwidths are small compared to the dimensions is referred to as a banded-block-banded matrix. 
\end{definition}

Each of these Jacobi matrices are then block-tridiagonal (block-bandwidths $(1,1)$). For $J_x^{(a)}$, the sub-blocks have sub-block-bandwidths $(2,2)$:
\begin{align*}
	A^{(a)}_{x,n} &:= 
		\begin{pmatrix}
			0 & A^{(a)}_{n,0,6} & 0 & & \\
			A^{(a)}_{n,1,5} & \ddots & \ddots & & \\
			& \ddots & \ddots & \ddots & \\
			& & A^{(a)}_{n,n,5} & 0 & A^{(a)}_{n,n,6} \\
		\end{pmatrix} \in \R^{(2n+1)\times(2n+3)}, \quad n = 0,1,2,\dots \\
	B^{(a)}_{x,n} &:= 
		\begin{pmatrix}
			0 & A^{(a)}_{n,0,4} & & \\
			A^{(a)}_{n,1,3} & \ddots & \ddots & \\
			& \ddots & \ddots & A^{(a)}_{n,n-1,4} \\
			& & A^{(a)}_{n,n,3} & 0
		\end{pmatrix} \in \R^{(2n+1)\times(2n+1)}  \quad n = 0,1,2,\dots \\
	C^{(a)}_{x,n} &:= 
		\begin{pmatrix}
			0 & A^{(a)}_{n,0,2} & & \\
			A^{(a)}_{n,1,1} & \ddots & \ddots & \\
			& \ddots & \ddots &A^{(a)}_{n,n-2,2} \\
			& & \ddots & 0 \\
			& & & A^{(a)}_{n,n,1} \\
		\end{pmatrix} \in \R^{(2n+1)\times(2n-1)}, \quad n = 1,2,\dots
\end{align*}
where for $k = 1,\dots,N, \: n = k,\dots,N$
\begin{align*}
	A^{(a)}_{n,k,j} &:= 
		\begin{pmatrix}
			\alphaa_{n,k,j} & 0 \\
			0 & \alphaa_{n,k,j}
		\end{pmatrix} \in \R^{2\times2}, (k \ne 1 \text { for } j \text{ odd}) \\
	A^{(a)}_{n,0,j} &:=
		\begin{pmatrix}
			\alphaa_{n,0,j} & 0
		\end{pmatrix} \in \R^{1\times2}, j \text{ even} \\
	A^{(a)}_{n,1,j} &:=
		\begin{pmatrix}
			\alphaa_{n,1,j} \\
			0
		\end{pmatrix} \in \R^{2\times1}, j \text{ odd}.
\end{align*}
For $J_y^{(a)}$, the sub-blocks have sub-block-bandwidths $(3,3)$:
\begin{align*}
	A^{(a)}_{y,n} &:= 
		\begin{pmatrix}
			0 & B^{(a)}_{n,0,6} & 0 & & \\
			B^{(a)}_{n,1,5} & \ddots & \ddots & & \\
			& \ddots & \ddots & \ddots & \\
			& & B^{(a)}_{n,n,5} & 0 & B^{(a)}_{n,n,6} \\
		\end{pmatrix} \in \R^{(2n+1)\times(2n+3)}, \quad n = 0,1,2,\dots \\
	B^{(a)}_{y,n} &:= 
		\begin{pmatrix}
			0 & B^{(a)}_{n,0,4} & & \\
			B^{(a)}_{n,1,3} & \ddots & \ddots & \\
			& \ddots & \ddots & B^{(a)}_{n,n-1,4} \\
			& & B^{(a)}_{n,n,3} & 0
		\end{pmatrix} \in \R^{(2n+1)\times(2n+1)}  \quad n = 0,1,2,\dots \\
	C^{(a)}_{y,n} &:= 
		\begin{pmatrix}
			0 & B^{(a)}_{n,0,2} & & \\
			B^{(a)}_{n,1,1} & \ddots & \ddots & \\
			& \ddots & \ddots & B^{(a)}_{n,n-2,2} \\
			& & \ddots & 0 \\
			& & & B^{(a)}_{n,n,1} \\
		\end{pmatrix} \in \R^{(2n+1)\times(2n-1)}, \quad n = 1,2,\dots
\end{align*}
where for $k = 1,\dots,N, \: n = k,\dots,N$  
\begin{align*}
	B^{(a)}_{n,k,j} &:= 
		\begin{pmatrix}
			0 & \betaa_{n,k,0,j} \\
			\betaa_{n,k,1,j} & 0
		\end{pmatrix} \in \R^{2\times2}, (k \ne 1 \text { for } j \text{ odd}) \\
	B^{(a)}_{n,0,j} &:=
		\begin{pmatrix}
			0 & \betaa_{n,0,0,j}
		\end{pmatrix} \in \R^{1\times2}, j \text{ even} \\
	B^{(a)}_{n,1,j} &:=
		\begin{pmatrix}
			0 \\
			\betaa_{n,1,1,j}
		\end{pmatrix}\in \R^{2\times1}, j \text{ odd}.
\end{align*}
For $J_z^{(a)}$, the sub-blocks are diagonal, i.e. have sub-block-bandwidths $(0,0)$:
\begin{align*}
	A^{(a)}_{z,n} &:= 
		\begin{pmatrix}
			\Gamma^{(a)}_{n,0,3} & 0 & \\
			0 & \ddots & \ddots & & \\
			& \ddots & \ddots & \ddots & \\
			& & 0 & \Gamma^{(a)}_{n,n,3} & 0 \\
		\end{pmatrix} \in \R^{(2n+1)\times(2n+3)}, \quad n = 0,1,2,\dots \\
	B^{(a)}_{z,n} &:= 
		\begin{pmatrix}
			\Gamma^{(a)}_{n,0,2} & \\
			& \ddots & & \\
			& & \ddots & \\
			& & & \Gamma^{(a)}_{n,n,2}
		\end{pmatrix} \in \R^{(2n+1)\times(2n+1)}  \quad n = 0,1,2,\dots \\
	C^{(a)}_{z,n} &:= 
		\begin{pmatrix}
			\Gamma^{(a)}_{n,0,1} & 0 & & \\
			0 & \ddots & \ddots & \\
			& \ddots & \ddots & 0 \\
			& & \ddots & \Gamma^{(a)}_{n,n-1,1} \\
			& & & 0 \\
		\end{pmatrix} \in \R^{(2n+1)\times(2n-1)}, \quad n = 1,2,\dots
\end{align*}
where for $k = 1,\dots,N, \: n = k,\dots,N$
\begin{align}
	\Gamma^{(a)}_{n,k,j} &:= 
		\begin{pmatrix}
			\gamma_{n,k,j} & 0 \\
			0 & \gamma_{n,k,j}
		\end{pmatrix} \in \R^{2\times2}, \label{eqn:jacobisubblocksGamma1} \\
	\Gamma^{(a)}_{n,0,j} &:= \gammaa_{n,0,j}. \label{eqn:jacobisubblocksGamma2}
\end{align}
Note that the sparsity of the Jacobi matrices (in particular the sparsity of the sub-blocks) comes from the natural sparsity of the three-term recurrences of the 1D OPs and the circular harmonics, meaning that the sparsity is not limited to the specific spherical cap, and would extend to the spherical band.

\subsection{Building the OPs} 

Following the triangle case \cite{olver2019triangle}, we use a multivariate analogue of Clenshaw's algorithm for evaluation, where we  combine each system in (\ref{eqn:jacobimatricesdefinitionalt}) into a block-tridiagonal system, for any $(x,y,z) \in \Omega$:
\begin{align*}
	\renewcommand\arraystretch{1.3}
	\begin{pmatrix}
		1 & & & \\
		B_0-G_0(x,y,z) & A_0 & & \\
		C_1 & B_1-G_1(x,y,z) & \quad A_1 \quad & \\
		& C_2 & B_2 - G_2(x,y,z)  & \ddots \\
		& & \ddots &\ddots
	\end{pmatrix}
	\bigscopta(x,y,z) =
		\begin{pmatrix}
	 		\scopa_0 \\ 0 \\ 0 \\ 0 \\ \vdots  \\
		\end{pmatrix},
\end{align*}
where we note $\scopa_0 := \scopa_{0,0,0}(x,y,z) \equiv \genjac_0^{(a,0)} \: \ch_0$, and for each $n = 0,1,2\dots$,
\begin{align*}
	A_n &:= 
		\begin{pmatrix}
			A^{(a)}_{x,n} \\
			A^{(a)}_{y,n} \\
			A^{(a)}_{z,n}
		\end{pmatrix} \in \R^{3(2n+1)\times(2n+3)}, \quad
	C_n := 
		\begin{pmatrix}
			C^{(a)}_{x,n} \\
			C^{(a)}_{y,n} \\
			C^{(a)}_{z,n}
		\end{pmatrix} \in \R^{3(2n+1)\times(2n-1)} \quad (n \ne 0), \nonumber \\
	B_n &:= 
		\begin{pmatrix}
			B^{(a)}_{x,n} \\
			B^{(a)}_{y,n} \\
			B^{(a)}_{z,n}
		\end{pmatrix} \in \R^{3(2n+1)\times(2n+1)}, \quad
	G_n(x,y,z) := 
		\begin{pmatrix}
			xI_{2n+1} \\
			yI_{2n+1} \\
			zI_{2n+1}
		\end{pmatrix} \in \R^{3(2n+1)\times(n+1)}.
\end{align*}
 
For each $n = 0,1,2\dots$ let $\Dnt$ be any matrix that is a left inverse of $A_n$, i.e. such that $\Dnt A_n = I_{2n+3}$. Multiplying our system by the preconditioner matrix that is given by the block diagonal matrix of the $\Dnt$'s, we obtain a lower triangular system \cite[p78]{dunkl2014orthogonal}, which can be expanded to obtain the recurrence:
\begin{align*}
	\begin{cases}
		\bigscopta_{-1}(x,y,z) := 0 \\
		\bigscopta_{0}(x,y) := \scopa_0 \\
		\bigscopta_{n+1}(x,y) = -\Dnt (B_n-G_n(x,y,z)) \bigscopta_n(x,y,z) - \Dnt C_n  \, \bigscopta_{n-1}(x,y,z), \quad n = 0,1,2,\dots.
	\end{cases}
\end{align*}

Note that we can define an explicit $\Dnt$ as follows:
\begin{align*}
	\Dnt := 
		\begin{pmatrix}
			0 & & & 0 & & & (\Gamma^{(a)}_{n,0,3})^{-1} & & \\
			& \ddots & & & \ddots & & & \ddots \\
			& & 0 & & & 0 & & & (\Gamma^{(a)}_{n,n,3})^{-1}  \\
			& & & & & \bm{\eta}^\top_{0} & & & \\
			& & & & & \bm{\eta}^\top_{1} & & &
		\end{pmatrix} \in \R^{(2n+3)\times3(2n+1)},
\end{align*}
for $n = 1, 2, \dots$ where again $\Gamma^{(a)}_{n,k,3}$ are defined in equations (\ref{eqn:jacobisubblocksGamma1}, \ref{eqn:jacobisubblocksGamma2}) for $k=0,\dots,n$, and where $\bm{\eta}_{0}, \bm{\eta}_{1} \in \R^{3(2n+1)}$ with entries given by 
\begin{align*}
	\big(\bm{\eta}_{0}\big)_j &= 
		\begin{cases}
			\frac{1}{\betaa_{n,n,1,6}} & j = 2(2n+1) \\
			\frac{- \: \betaa_{n,n,1,5}}{\betaa_{n,n,1,6} \: \gammaa_{n, n-1, 3}} & j = 3(2n+1) - 3 \\
			0 & o/w
		\end{cases} \\
	\big(\bm{\eta}_{1}\big)_j &= 
		\begin{cases}
			\frac{1}{\alphaa_{n,n,6}} & j = 2n+1 \\
			\frac{- \: \alphaa_{n,n,5}}{\alphaa_{n,n,6} \: \gammaa_{n, n-1, 3}} & j = 3(2n+1) - 2  \text{ and } n > 1 \\
			0 & o/w
		\end{cases}
\end{align*}
For $n=0$, we can simply take
\begin{align*}
	D^\top_0 &:= 
		\begin{pmatrix}
			0 & 0 & \frac{1}{\gammaa_{0,0,3}} \\
			\frac{1}{\alphaa_{0,0,6}} & 0 & 0 \\
			0 & \frac{1}{\betaa_{0,0,6}} & 0
		\end{pmatrix} \in \R^{3\times3}.
\end{align*}

It follows that we can apply $\Dnt$ in $O(n)$ complexity, and thereby calculate $\bigscopta_{0}(x,y,z)$  through $\bigscopta_{n}(x,y,z)$ in optimal $O(n^2)$ complexity. 

\begin{definition}\label{def:clenshawmats}
	The \textit{recurrence coefficient matrices} associated with the OPs $\{\scopnkia\}$ are given by the matrices $A_n, B_n, C_n, \Dnt$ for $n = 0,1,2,\dots$ defined above.
\end{definition}

\section{Sparse partial differential operators}\label{Section:PDOs}

In this section we will derive the entries of spherical partial differential operators applied to our basis, demonstrating their sparsity in the process. To this end, as alluded to in Section~\ref{subsection:jacobimats}, we introduce new notation for a different ordering of the OP vector, in order to exploit the orthogonality the polynomials $\chki$ will bring and thus ensure the operators will be block-diagonal. Let $N \in \N$ and define:
\begin{align}
	\bigscopNka &:= 
		\begin{pmatrix}
			\scopa_{k,k,0}(x,y,z) \\
			\scopa_{k,k,1}(x,y,z) \\
			\vdots \\
			\scopa_{N,k,0}(x,y,z) \\
			\scopa_{N,k,1}(x,y,z)
		\end{pmatrix} \in \R^{2(N-k+1)},  \quad k = 1,\dots,N, \label{eqn:OPdefNka} \\ 
	\bigscopa_{N,0} &:= 
		\begin{pmatrix}
			\scopa_{0,0,0}(x,y,z) \\
			\vdots \\
			\scopa_{N,0,0}(x,y,z) \\
		\end{pmatrix} \in \R^{N+1}, \label{eqn:OPdefN0a} \\
	\bigscopNa &:= 
		\begin{pmatrix}
			\bigscopa_{N,0} \\
			\vdots \\
			\bigscopa_{N,N}
		\end{pmatrix} \in \R^{(N+1)^2} \label{eqn:OPdefNa}
\end{align}

We further denote the weighted set of OPs on $\Omega$ by 
\begin{align*}
	\bigWNa(x,y,z) := \genjacw^{(a,0)}(z) \: \bigscopNa(x,y,z),
\end{align*}

The operator matrices we derive here act on coefficient vectors, that represent a function $f(x,y,z)$ defined on $\Omega$ in spectral space -- such a function is approximated by its expansion up to degree $N$: 
\begin{align*}
	f(x,y,z) = \bigscopNa(x,y,z)^\top \vec f = \sum_{n=0}^{N} \sum_{k=0}^{n} \sum_{i=0}^{1} f_{n,k,i} \: \scopnkia(x,y,z),
\end{align*}
where $\vec f = (f_{n,k,i})$ is the coefficients vector for the function $f$.

\begin{definition}\label{def:differentialoperators}
	Let $a$ be a nonnegative parameter, and $\tilde a \ge 2$ be a positive integer. Define the operator matrices $D_\varphi^{(a)}, \: W_\varphi^{(a)}, \: D_\theta, \: \calL^{(a)\to(a+\tilde a)}, \: \calL_W^{(a)\to(a-\tilde a)}, \: \Delta^{(1)}_W$ according to:
\begin{align*}
	\rho \pfpx{\varphi}{f} (x,y,z)&= \bigscop_N^{(a+1)}(x,y,z)^\top \: D_\varphi^{(a)} \: \mathbf{f}, \\
	\rho \ppx{\varphi}[\genjacw^{(a,0)}(z) \: f(x,y,z)] &= \bigW_N^{(a-1)}(x,y)^\top \: W_\varphi^{(a)}\: \mathbf{f}, \\
	\pfpx{\theta}{f}(x,y,z) &= \bigscopNa(x,y,z)^\top \: D_\theta \: \mathbf{f}, \\
	\DeltaS f(x,y,z) &= \bigscopN^{(a+\tilde a)}(x,y,z)^\top \: \calL^{(a)\to(a+\tilde a)} \: \mathbf{f}, \\
	\DeltaS \big( \genjacw^{(a,0)}(z) \: f(x,y,z) \big) &= \bigW_N^{(a-\tilde a)}(x,y,z)^\top \: \calL_W^{(a)\to(a-\tilde a)} \: \mathbf{f},  \quad \text{ (for } a \ge 2 \text{ only)} \\
	\DeltaS \big( \genjacw^{(1,0)}(z) \: f(x,y,z) \big) &= \bigscopN^{(1)}(x,y,z)^\top \: \Delta^{(1)}_W \: \mathbf{f}, \quad \text{ (for } a = 1 \text{ only)}
\end{align*}
\end{definition}

The incrementing and decrementing of parameters as seen here is analogous to other well known orthogonal polynomial families' derivatives, for example the Jacobi polynomials on the interval, as seen in the DLMF \cite[(18.9.3)]{DLMF}, on the triangle \cite{olver2018recurrence}, and on the disk-slice \cite{snowball2019sparse}. The operators we define here are for partial derivatives with respect to the spherical coordinates $(\varphi, \theta)$, so that we can more easily apply the operators to PDEs on the surface of a sphere (for example, surface Laplacian operator in the Poisson equation). With the OP ordering by Fourier mode $k$ defined in equations (\ref{eqn:OPdefNka}, \ref{eqn:OPdefN0a}, \ref{eqn:OPdefNa}) these rotationally invariant operators are block-diagonal, meaning simple and parallelisable practical application.

\begin{theorem}\label{theorem:sparsityofdifferentialoperators}
	The operator matrices $D_\varphi^{(a)}, \: W_\varphi^{(a)}, \: D_\theta, \: \calL^{(a)\to(a+\tilde a)}, \: \calL_W^{(a)\to(a-\tilde a)}, \: \Delta^{(1)}_W$ from Definition \ref{def:differentialoperators} are sparse, with banded-block-banded structure. More specifically:
\begin{itemize}
	\item $D_\varphi^{(a)}$ is block-diagonal with sub-block-bandwidths $(2, 4)$
  	\item $W_\varphi^{(a)}$ is block-diagonal with sub-block-bandwidths $(4, 2)$
	\item $D_\theta$ is block-diagonal with sub-block-bandwidths $(1, 1)$
	\item $\calL^{(a)\to(a+\tilde a)}$ is block-diagonal with sub-block-bandwidths $(0, 4)$
	\item $\calL_W^{(a)\to(a-\tilde a)}$ is block-diagonal with sub-block-bandwidths $(4, 0)$
	\item $\Delta^{(1)}_W$ is block-diagonal with sub-block-bandwidths $(2, 2)$
\end{itemize}
\end{theorem}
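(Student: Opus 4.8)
The plan is to establish each bullet by the same mechanism: express the action of the operator on a single basis element $\scopnkia$ in terms of the one-dimensional OPs $\genjac_{n-k}^{(a,2k)}$ and the circular harmonics $\chki$, expand the result in the target basis, and read off which indices $(m,j,h)$ can give nonzero coefficients. Because $z$ (and hence $\rho(z)$) is rotationally invariant, the $\theta$-dependence and the $z$-dependence separate in every inner product, exactly as in the proof of Lemma~\ref{lemma:Qrecurrence}; this immediately forces $h = i$ (for the $\varphi$-derivatives and Laplacians, which involve only $\partial_\theta^2$, $\rho\partial_\varphi$, and multiplication by $\rho$-powers and $z$) or $h = |i-1|$-type interactions only through $\partial_\theta$, and it decouples the Fourier mode $k$ so that each operator is block-diagonal in the ordering of \eqref{eqn:OPdefNka}--\eqref{eqn:OPdefNa}. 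So the whole theorem reduces to: within a fixed mode $k$, compute the bandwidths in the polynomial-degree index $n$.

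Next I would treat $D_\theta$ first, as it is the easiest: $\partial_\theta$ acting on $\chki(\theta)$ sends $\cos k\theta \mapsto -k\sin k\theta$ and $\sin k\theta \mapsto k\cos k\theta$, which is $\pm k\,\ch_{k,|i-1|}$, and crucially leaves the $z$-part $\genjac_{n-k}^{(a,2k)}(z)\rho(z)^k$ untouched. Hence $\partial_\theta\scopnkia$ is, up to sign and the factor $k$, exactly $\scopa_{n,k,|i-1|}$, giving a diagonal block in $n$ — but because of the interleaving of the two $i$-values inside a mode-$k$ block, this ``diagonal in $n$'' becomes sub-block-bandwidths $(1,1)$. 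For $D_\varphi^{(a)}$ and $W_\varphi^{(a)}$ I would use $\rho\,\partial_\varphi = -\rho^2\,\partial_z = -(1-z^2)\partial_z$ together with the derivative/lowering relation for the semi-classical family $\{\genjac^{(a,b)}\}$ from \cite{snowball2019sparse}: differentiating $\genjac_{n-k}^{(a,2k)}(z)\rho(z)^k$ produces a combination of $\genjac^{(a\pm1,2k\pm2)}$ times $\rho$-powers, and re-expanding in the $(a+1)$ basis (respectively re-expanding the weighted version in the $(a-1)$ basis) in terms of the $\genjac^{(a+1,2k)}$ polynomials via the connection coefficients — which are themselves banded because incrementing $a$ by one and the power of $\rho$ by a bounded amount changes the degree index only by a bounded amount. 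Tracking exactly how many degrees up or down the $z$-polynomial moves (accounting for the extra $\rho^2 = 1-z^2$ factor, which shifts degree by two, and the parameter changes) gives the asymmetric $(2,4)$ and $(4,2)$ counts; the asymmetry between lowering and raising comes from whether we are differentiating the bare OP or the weighted OP $\genjacw^{(a,0)}\scopnkia$.

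For the spherical Laplacian operators I would write $\DeltaS = \frac1\rho\partial_\varphi(\rho\,\partial_\varphi) + \frac1{\rho^2}\partial_\theta^2$ and compose the pieces already analysed: $\calL^{(a)\to(a+\tilde a)}$ is $W_\varphi$-type followed by $D_\varphi$-type (or the direct composition), and since each $\rho\partial_\varphi$ step is a bounded degree shift, two of them plus the $1/\rho^2$ normalization give sub-block-bandwidths $(0,4)$ — the lower bandwidth collapsing to $0$ because $\DeltaS$ maps degree-$n$ polynomials into degree-$n$ polynomials but the parameter jump $a\to a+\tilde a$ with $\tilde a\ge2$ means the target OP of a given degree is ``higher'' in the old grading, so only forward coupling survives. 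Dually $\calL_W^{(a)\to(a-\tilde a)}$ has bandwidths $(4,0)$, and $\Delta^{(1)}_W$, where the parameter returns to the same value $a=1$, is symmetric with bandwidths $(2,2)$. The main obstacle I anticipate is the bookkeeping in the $D_\varphi^{(a)}$ step: one must carefully combine the action of $\partial_z$ on the product $\genjac_{n-k}^{(a,2k)}(z)\,\rho(z)^k$ (which mixes a derivative term and a $\rho'$ term), convert every resulting $\genjac$ with shifted parameters back to the fixed family $\genjac^{(a+1,2k)}$ using the connection/recurrence relations of \cite{snowball2019sparse}, and verify that no cancellation enlarges — and no accident shrinks — the claimed bandwidths; getting the exact numbers $(2,4)$ rather than something looser is where the real work lies, whereas the block-diagonality and the Laplacian bandwidths then follow formally by composition.
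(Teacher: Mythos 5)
Your reduction to a fixed Fourier mode $k$ (via separation of the $\theta$- and $z$-integrals), your treatment of $D_\theta$, and your plan for $D_\varphi^{(a)}$ and $W_\varphi^{(a)}$ all line up with the paper: there too the $\theta$-integral forces $j=k$, $h=i$, and the degree coupling is bounded by a single integration by parts in $z$ followed by orthogonality of $\genjacnmk^{(a,2k)}$ against polynomials of too-low degree. Your ``banded ladder/connection coefficients'' for the semi-classical family are exactly what that integration by parts establishes, so for those four bullets the two routes are the same argument in different clothing, modulo the bookkeeping you yourself flag as outstanding.

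The genuine gap is in the three Laplacian bullets. Composing the $\varphi$-operators represents $\rho\,\partial_\varphi(\rho\,\partial_\varphi\,\cdot) = \rho^2\DeltaS - \partial_\theta^2$, i.e.\ the $\rho^2$-\emph{factored} Laplacian (this is precisely the operator $D_\varphi^{(0)}W_\varphi^{(1)} + T^{(0)\to(1)}T_W^{(1)\to(0)}(D_\theta)^2$ of Section~\ref{subsection:furtherdiffoperators}), not $\DeltaS$ itself; the ``$1/\rho^2$ normalization'' you invoke is multiplication by $(1-z^2)^{-1}$, which is not a banded operation, so the claimed bandwidths cannot ``follow formally by composition.'' Even granting a composition, the product of a $(2,4)$-banded and a $(4,2)$-banded block is a priori $(6,6)$-banded, not $(0,4)$, $(4,0)$ or $(2,2)$, and your explanation for the collapse of one bandwidth to zero (``the parameter jump means only forward coupling survives'') is not the operative mechanism. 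What the paper actually does is: (i) compute $\DeltaS\scopnki^{(a)}$ and $\DeltaS(\genjacw^{(a,0)}\scopnki^{(a)})$ in closed form (equations (\ref{eqn:oplaplacian}) and (\ref{eqn:woplaplacian})), observing that the apparent $\rho^{-2}$ singularities cancel so the image is again a polynomial of degree at most $n$ --- this is what gives $m\le n$ and hence the zero bandwidth on one side; and (ii) integrate by parts twice for $\calL^{(a)\to(a+\tilde a)}$, thrice for $\calL_W^{(a)\to(a-\tilde a)}$, and invoke Lemma~\ref{lemma:Rsecondderivative} for $\Delta^{(1)}_W$, so as to throw all derivatives back onto $\genjac_{m-k}$ and reduce the coefficient to $\int_\alpha^1 \genjacnmk^{(a,2k)}\,\genjacw^{(a,2k)}\,r\,\D z$ with $r$ of controlled degree, which vanishes by orthogonality below the stated band. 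You would need to supply these explicit formulas and the repeated integration by parts (equivalently, a self-adjointness argument for $\DeltaS$ on the cap) to close the proof; the compositional shortcut as written does not.
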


In order to show the last part of Theorem \ref{theorem:sparsityofdifferentialoperators}, we require the following short lemma.

\begin{lemma}\label{lemma:Rsecondderivative}
	For any general parameter $a$ and any $n = 0,1,\dots$, $k = 0,\dots,n$ we have that
\begin{align*}
	&\ddx{z}[\genjacw^{(a+1, 2(k+1))} \: \genjacnmk^{(a,2k) \: \prime}] \\
	&\quad \quad = \genjacw^{(a+1, 2(k+1))} \: \genjacnmk^{(a,2k) \: \prime \prime} - 2(k+1)z \genjacw^{(a+1, 2k)} \: \genjacnmk^{(a,2k) \: \prime} + (a+1) \genjacw^{(a, 2(k+1))} \: \genjacnmk^{(a,2k) \: \prime} \\
	&\quad \quad = \sum_{m = n-1}^{n+1} \: c_{m,k} \: \genjacw^{(a, 2k)} \: \genjac_{m-k}^{(a,2k)}
\end{align*}
where 
\begin{align*}
	c_{m,k} = - \frac{1}{\normgenjac^{(a,2k)}} \: \int_\alpha^1 \: \genjacnmk^{(a,2k) \: \prime} \: \genjac_{m-k}^{(a,2k) \: \prime} \: \genjacw^{(a+1, 2(k+1))} \: \D z
\end{align*}
\end{lemma}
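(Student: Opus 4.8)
The plan is to establish the two claimed equalities in turn. For the first equality, I would simply expand the derivative $\ddx{z}[\genjacw^{(a+1,2(k+1))} \, \genjacnmk^{(a,2k)\,\prime}]$ using the product rule, writing $\genjacw^{(a+1,2(k+1))}(z) = (z-\alpha)^{a+1}\rho(z)^{2(k+1)} = (z-\alpha)^{a+1}(1-z^2)^{k+1}$ and differentiating the weight factor. The $z$-derivative of the weight splits into a term from differentiating $(z-\alpha)^{a+1}$, which produces $(a+1)(z-\alpha)^a \rho(z)^{2(k+1)} = (a+1)\genjacw^{(a,2(k+1))}$, and a term from differentiating $(1-z^2)^{k+1}$, which produces $-2(k+1)z\,(z-\alpha)^{a+1}(1-z^2)^{k} = -2(k+1)z\,\genjacw^{(a+1,2k)}$. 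Together with the term $\genjacw^{(a+1,2(k+1))} \genjacnmk^{(a,2k)\,\prime\prime}$ from differentiating $\genjacnmk^{(a,2k)\,\prime}$, this is exactly the middle expression; this step is a routine computation.

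For the second equality, the key observation is that the left-hand side, call it $g(z) := \ddx{z}[\genjacw^{(a+1,2(k+1))}(z)\,\genjacnmk^{(a,2k)\,\prime}(z)]$, when divided by $\genjacw^{(a,2k)}(z)$, is a polynomial of degree at most $n-k+1$ in $z$. Indeed, from the middle expression each summand, after dividing by $\genjacw^{(a,2k)} = (z-\alpha)^a(1-z^2)^k$, becomes a polynomial: $\genjacw^{(a+1,2(k+1))}/\genjacw^{(a,2k)} = (z-\alpha)(1-z^2)$ times $\genjacnmk^{(a,2k)\,\prime\prime}$ (degree $\le n-k-2+3 = n-k+1$); $-2(k+1)z\,\genjacw^{(a+1,2k)}/\genjacw^{(a,2k)} = -2(k+1)z(z-\alpha)$ times $\genjacnmk^{(a,2k)\,\prime}$ (degree $\le 2 + n-k-1 = n-k+1$); and $(a+1)\genjacw^{(a,2(k+1))}/\genjacw^{(a,2k)} = (a+1)(1-z^2)$ times $\genjacnmk^{(a,2k)\,\prime}$ (degree $\le 2+n-k-1 = n-k+1$). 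Hence $g/\genjacw^{(a,2k)}$ is a polynomial of degree $\le n-k+1$, so it has a finite expansion $g(z)/\genjacw^{(a,2k)}(z) = \sum_{m=k-1}^{n+1} c_{m,k}\,\genjac_{m-k}^{(a,2k)}(z)$ in the orthonormal basis $\{\genjac_j^{(a,2k)}\}$, where the lower limit can be taken as $n-1$ (equivalently $m-k \ge 0$) once we verify the lower coefficients vanish; this gives the shape $\sum_{m=n-1}^{n+1}$ claimed.

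To compute the coefficients and verify the sparsity, I would use orthonormality: $c_{m,k} = \langle g/\genjacw^{(a,2k)},\, \genjac_{m-k}^{(a,2k)}\rangle_{\genjacw^{(a,2k)}} = \frac{1}{\normgenjac^{(a,2k)}}\int_\alpha^1 g(z)\,\genjac_{m-k}^{(a,2k)}(z)\,\D z$. Substituting $g = \ddx{z}[\genjacw^{(a+1,2(k+1))}\,\genjacnmk^{(a,2k)\,\prime}]$ and integrating by parts, the boundary terms vanish because $\genjacw^{(a+1,2(k+1))}(z) = (z-\alpha)^{a+1}(1-z^2)^{k+1}$ vanishes at both $z=\alpha$ (since $a+1 \ge 1$) and $z=1$ (since $k+1 \ge 1$), leaving
\begin{align*}
	c_{m,k} = -\frac{1}{\normgenjac^{(a,2k)}}\int_\alpha^1 \genjacnmk^{(a,2k)\,\prime}(z)\,\genjac_{m-k}^{(a,2k)\,\prime}(z)\,\genjacw^{(a+1,2(k+1))}(z)\,\D z,
\end{align*}
which is the stated formula. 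The sparsity to degrees $m \in \{n-1, n, n+1\}$ then follows: the integrand's polynomial part $\genjacnmk^{(a,2k)\,\prime}\,\genjac_{m-k}^{(a,2k)\,\prime}$ has degree $(n-k-1)+(m-k-1)$, and against the weight $\genjacw^{(a+1,2(k+1))}$ the integral $c_{m,k}$ represents a coefficient in the degree $\le n-k+1$ expansion, so $c_{m,k}=0$ for $m-k > n-k+1$, i.e. $m > n+1$; and since the expression is symmetric in $n$ and $m$ in the integral formula (both appear as first derivatives against the same weight), the same argument with roles reversed — $c_{m,k}$ as a coefficient when expanding the analogous expression built from $\genjac_{m-k}^{(a,2k)}$ — forces $c_{m,k}=0$ for $n > m+1$, i.e. $m < n-1$. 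The main obstacle is this last symmetry/degree-counting argument for the lower bound $m \ge n-1$: one must argue carefully that although $g/\genjacw^{(a,2k)}$ manifestly has degree $\le n-k+1$ giving the upper cutoff immediately, the lower cutoff requires either the symmetry of the bilinear form $c_{m,k}$ in $(n,m)$ or a direct degree argument on the reversed pairing; everything else is bookkeeping with the explicit weight factors.
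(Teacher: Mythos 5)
Your proposal is correct and follows essentially the same route as the paper: product rule for the first identity, the observation that the left-hand side is $\genjacw^{(a,2k)}$ times a polynomial of degree $\le n-k+1$ for the upper cutoff, one integration by parts (with vanishing boundary terms) for the coefficient formula, and the lower cutoff $m \ge n-1$ from the symmetry of the bilinear form $\int p'q'\,\genjacw^{(a+1,2(k+1))}$ — which is exactly what the paper obtains by integrating by parts a second time. The only cosmetic difference is that you invoke symmetry where the paper performs the second integration by parts explicitly; these are the same argument.
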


\begin{proof}[Proof of Lemma \ref{lemma:Rsecondderivative}]
	Since $\ddx{z}[\genjacw^{(a+1, 2(k+1))} \: \genjacnmk^{(a,2k) \: \prime}] = \genjacw^{(a, 2k)} \: r_{n-k+1}$ where $r_{n-k+1}$ is a degree $n - k + 1$ polynomial, we have that 
\begin{align*}
	\ddx{z}[\genjacw^{(a+1, 2(k+1))} \: \genjacnmk^{(a,2k) \: \prime}] =\sum_{m=0}^{n-k+1} \: \tilde c_{\{n,k\},m} \: \genjacw^{(a, 2k)} \: \genjac_{m}^{(a,2k)}
\end{align*}
for some coefficients $\tilde c_{\{n,k\},m}$. These coefficients are given by
\begin{align*}
	\tilde c_{\{n,k\},m} &= \frac{1}{\normgenjac^{(a,2k)}} \: \ip<\ddx{z}[\genjacw^{(a+1, 2(k+1))} \: \genjacnmk^{(a,2k) \: \prime}], \genjac_{m}^{(a,2k)}>_{\genjacw^{(0,0)}} \\
	&= - \frac{1}{\normgenjac^{(a,2k)}}  \: \int_\alpha^1 \: \genjacnmk^{(a,2k) \: \prime} \: \genjac_{m}^{(a,2k) \: \prime} \: \genjacw^{(a+1, 2(k+1))} \: \D z
\end{align*}
We show that these are zero for $m < n - k - 1$ by integrating twice by parts:
\begin{align*}
	&\ip<\ddx{z}[\genjacw^{(a+1, 2(k+1))} \: \genjacnmk^{(a,2k) \: \prime}], \genjac_{m}^{(a,2k)}>_{\genjacw^{(0,0)}} \\
	&\quad \quad \quad = - \int_\alpha^1 \: \genjacnmk^{(a,2k) \: \prime} \: \genjac_{m-k}^{(a,2k) \: \prime} \: \genjacw^{(a+1, 2(k+1))} \: \D z \\
	&\quad \quad \quad = \int_\alpha^1 \: \genjacnmk^{(a,2k) \: \prime} \: [(a+1) \genjac_{m}^{(a,2k) \: \prime} \: \genjacw^{(0, 2)} \\
	&\quad \quad \quad \quad \quad \quad \quad \quad \quad \quad - 2(k+1) z \genjac_{m}^{(a,2k) \: \prime} \: \genjacw^{(1, 0)} + \genjac_{m}^{(a,2k) \: \prime \prime} \: \genjacw^{(1, 2)}] \: \genjacw^{(a, 2k)} \: \D z
\end{align*}
which is indeed zero for $m < n - k - 1$ by orthogonality.
\end{proof}

\begin{proof}[Proof of Theorem \ref{theorem:sparsityofdifferentialoperators}]

For the operator $D_\theta$ for partial differentiation by $\theta$, we simply have that
\begin{align*}
	\pptheta \scopnkia(x,y,z) &= \genjacnmk^{(a,2k)}(z) \: \rho(z)^{k} \: \ddx{\theta} \chki(\theta) \\
	&= 
	\begin{cases}
		(-1)^{i+1} \: k \: \scopa_{n,k,|i-1|}(x,y,z) & k > 0 \\
		0 & k = 0
	\end{cases}.
\end{align*}

We now proceed with the case for the operator $D_\varphi^{(a)}$ for partial differentiation by $\varphi$. The entries of the operator are given by the coefficients in the expansion 
$$
\rhoppphi \scopnkia = \sum_{m=0}^{n+1} \sum_{j=0}^m \sum_{h=0}^1 c_{m,j,h} \: \scopmjh^{(a+1)},
$$ 
where the coefficients are 
\begin{align*}
	c_{m,j,h} = \norm{\scopmjh^{(a+1)}}^{-2}_{\scop^{(a+1)}} \ip<\rhoppphi \scopnkia, \: \scopmjh^{(a+1)}>_{\scop^{(a+1)}}.
\end{align*}
Now, note that:
\begin{align*}
	\genjacw^{(a,b) \: \prime}(z) &= a \: \genjacw^{(a-1,b)}(z) + c \: \rho(z) \: \rho'(z) \:\genjacw^{(a,b-2)}(z), \\
	\rho(z) \: \rho'(z) &= -z \\
	\ppphi \scopnkia(x,y,z) &= -\rho(z) \: \ddz \Big[ \rho(z)^k \: \genjacnmk^{(a,2k)}(z) \Big] \chki(\theta), \\
	\ppphi \Big[ \genjacw^{(a,0)}(z) \: \scopnkia(x,y,z) \Big] &= -\rho(z) \: \ddz \Big[ \genjacw^{(a,k)}(z) \: \genjacnmk^{(a,2k)}(z) \Big] \chki(\theta).
\end{align*}
Then, 
\begin{align*}
	&\ip<\rhoppphi \scopnkia, \: \scopmjh^{(a+1)}>_{\scop^{(a+1)}} \\
	&\quad = - \int_\alpha^1 \Big( \int_0^{2\pi} \: \rho(z)^2 \ddz \: [\genjacnmk^{(a,2k)}(z) \: \rho(z)^k] \: \genjacmmj^{(a+1,2j)}(z) \: \rho(z)^j \: \chki(\theta) \: \chjh(\theta) \: \D \theta \Big) \: \genjacw^{(a+1, 0)} \: \D z \\
	&\quad = \Big( \int_0^{2\pi} \: \chki(\theta) \: \chjh(\theta) \: \D \theta \Big) \: \Big( \int_\alpha^1 \: \genjacmmj^{(a+1,2j)} \: [k z \genjacnmk^{(a, 2k)} - \rho^2 \genjacnmk^{(a, 2k) \: \prime}] \: \genjacw^{(a+1, k + j)} \: \D z \Big) \\
	&\quad = \pi \: \delta_{k,j} \: \delta_{i,h} \: \int_\alpha^1 \: \genjac_{m-k}^{(a+1,2k)} \: [k z \genjacnmk^{(a, 2k)} - \rho^2 \genjacnmk^{(a, 2k) \: \prime}] \: \genjacw^{(a+1, 2k)} \: \D z \\
	&\quad = \pi \: \delta_{k,j} \: \delta_{i,h} \: \int_\alpha^1 \: \genjacnmk^{(a, 2k)} \: \Big\{  k z \: \genjac_{m-k}^{(a+1,2k)} \: \genjacw^{(1, 0)} + \genjac_{m-k}^{(a+1,2k) \: \prime} \: \genjacw^{(1, 2)}\\
	&\quad \quad \quad \quad \quad \quad \quad \quad \quad \quad \quad \quad \quad + a \: \rho^2 \: \genjac_{m-k}^{(a+1,2k)} - (2k+2) z \: \genjac_{m-k}^{(a+1,2k)} \: \genjacw^{(1, 0)} \Big\} \: \genjacw^{(a, 2k)} \: \D z
\end{align*}
which is zero for $j \ne k$, $h \ne i$, and $m < n - 2$ by orthogonality.

Similarly for the operator $W_\varphi^{(a)}$ for partial differentiation by $\varphi$ on the weighted space, the entries of the operator are given by the coefficients in the expansion $\rhoppphi (\genjacw^{(a,0)} \: \scopnkia) = \sum_{m=0}^{n+2} \sum_{j=0}^m \sum_{h=0}^1 c_{m,j,h} \: \genjacw^{(a-1,0)} \: \scopmjh^{(a-1)}$, where the coefficients are
\begin{align*}
	c_{m,j,h} = \norm{\scopmjh^{(a-1)}}^{-2}_{\scop^{(a-1)}} \ip<\rhoppphi (\genjacw^{(a,0)} \: \scopnkia), \: \scopmjh^{(a-1)}>_{\scop^{(0)}}.
\end{align*}
Now,
\begin{align*}
	&\ip<\rhoppphi (\genjacw^{(a,0)} \: \scopnkia), \: \scopmjh^{(a-1)}>_{\scop^{(0)}} \\
	&\quad = - \int_\alpha^1 \Big( \int_0^{2\pi} \: \rho(z)^2 \ddz \: [\genjacnmk^{(a,2k)}(z) \: \genjacw^{(a, k)}(z)] \: \genjacmmj^{(a-1,2j)}(z) \: \rho(z)^j \: \chki(\theta) \: \chjh(\theta) \: \D \theta \Big) \: \D z \\
	&\quad = \Big( \int_0^{2\pi} \: \chki(\theta) \: \chjh(\theta) \: \D \theta \Big) \\
	&\quad \quad \quad \quad \cdot \: \Big( \int_\alpha^1 \: \genjacmmj^{(a-1,2j)} \: [k z \genjacnmk^{(a, 2k)} \: \genjacw^{(1, 0)} - \genjacnmk^{(a, 2k) \: \prime} \: \genjacw^{(1, 2)} - a \: \genjacnmk^{(a, 2k)} \: \rho^2] \: \genjacw^{(a-1, k + j)} \: \D z \Big) \\
	&\quad = \pi \: \delta_{k,j} \: \delta_{i,h} \:  \int_\alpha^1 \: \genjac_{m-k}^{(a-1,2k)} \: [k z \genjacnmk^{(a, 2k)} \: \genjacw^{(1, 0)} - \genjacnmk^{(a, 2k) \: \prime} \: \genjacw^{(1, 2)} - a \: \genjacnmk^{(a, 2k)} \: \rho^2] \: \genjacw^{(a-1, 2k)} \: \D z \\
	&\quad = \pi \: \delta_{k,j} \: \delta_{i,h} \: \int_\alpha^1 \: \genjacnmk^{(a, 2k)} \: \Big\{  k z \: \genjac_{m-k}^{(a-1,2k)} \: \genjacw^{(1, 0)} - a \: \rho^2 \: \genjac_{m-k}^{(a-1,2k)} + \genjac_{m-k}^{(a-1,2k) \: \prime} \: \genjacw^{(1, 2)} \\
	&\quad \quad \quad \quad \quad \quad \quad \quad \quad \quad \quad \quad \quad + a \: \rho^2 \: \genjac_{m-k}^{(a-1,2k)} - (2k+2) z \: \genjac_{m-k}^{(a-1,2k)} \: \genjacw^{(1, 0)} \Big\} \: \genjacw^{(a-1, 2k)} \: \D z \\
	&\quad = \pi \: \delta_{k,j} \: \delta_{i,h} \: \int_\alpha^1 \: \genjacnmk^{(a, 2k)} \: \Big\{  k z \: \genjac_{m-k}^{(a-1,2k)} + \genjac_{m-k}^{(a-1,2k) \: \prime} \: \rho^2- (2k+2) z \: \genjac_{m-k}^{(a-1,2k)} \Big\} \: \genjacw^{(a, 2k)} \: \D z
\end{align*}
which is zero for $j \ne k$, $h \ne i$, and $m < n - 1$ by orthogonality.

We move on to the spherical Laplacian operators. Note that the Laplacian acting on the weighted and non-weighted spherical cap OP $\scopnki^{(a)}$ yield
\begin{align}
	\DeltaS \: \scopnki^{(a)} &= {1 \over \rho} \ppx{\varphi} \Big( \rho \ppx{\varphi} [\genjacnmk^{(a,2k)}(\cosphi) \: \sin^k\varphi] \Big) \chki(\theta) \nonumber \\
	&\quad \quad \quad \quad + \genjacnmk^{(a,2k)}(\cosphi) \: \sin^{k-2}\varphi \pmpxm{\theta}{2} \chki(\theta) \nonumber \\
	&= \chki(\theta) \rho(z)^k \Big\{ -k(k+1) \genjacnmk^{(a,2k)}(z) - 2(k+1) z \: \genjacnmk^{(a,2k) \: \prime}(z) \nonumber \\
	&\quad \quad \quad \quad \quad \quad \quad + \rho(z)^2 \genjacnmk^{(a,2k) \: \prime \prime}(z) \Big\}, \label{eqn:oplaplacian} \\
	\DeltaS \big(\genjacw^{(a,0)} \: \scopnki^{(a)} \big) &= {1 \over \rho} \ppx{\varphi} \Big( \rho \ppx{\varphi} [\genjacw^{(a,0)}(\cosphi) \: \genjacnmk^{(a,2k)}(\cosphi) \: \sin^k\varphi] \Big) \chki(\theta) \nonumber \\
	&\quad \quad \quad \quad + \genjacw^{(a,0)}(\cosphi) \: \genjacnmk^{(a,2k)}(\cosphi) \: \sin^{k-2}\varphi \pmpxm{\theta}{2} \chki(\theta) \nonumber \\
	&= \chki(\theta) \Big\{\genjacnmk^{(a,2k)}(z) \big[ -k(k+1) \genjacw^{(a,k)}(z) - 2a(k+1) z \: \genjacw^{(a-1,k)}(z) \big] \nonumber \\
	&\quad \quad \quad \quad \quad + a(a-1) \genjacnmk^{(a,2k)}(z) \: \genjacw^{(a-2,k+1)}(z) \nonumber \\
	&\quad \quad \quad \quad \quad + \genjacnmk^{(a,2k) \: \prime}(z) \big[ -2(k+1) z \: \genjacw^{(a,k)}(z) + 2a \: \genjacw^{(a-1,k+2)}(z) \big] \nonumber \\
	&\quad \quad \quad \quad \quad + \genjacnmk^{(a,2k) \: \prime \prime}(z) \: \genjacw^{(a,k+2)}(z) \Big\}. \label{eqn:woplaplacian}
\end{align}

For the operator $\calL^{(a)\to(a+\tilde a)}$ for the surface Laplacian on a non-weighted space, the entries of the operator are given by the coefficients in the expansion 
$$\DeltaS \scopnki^{(a)} = \sum_{m=0}^{n} \sum_{j=0}^m \sum_{h=0}^1 c_{m,j,h} \: \scopmjh^{(a + \tilde a)},$$
 where the coefficients are
\begin{align*}
	c_{m,j,h} = \norm{\scopmjh^{(a + \tilde a)}}^{-2}_{\scop^{(a + \tilde a)}} \ip<\DeltaS \scopnki^{(a)}, \: \scopmjh^{(a + \tilde a)}>_{\scop^{(a + \tilde a)}}.
\end{align*}
Using \bsrefeqn{eqn:oplaplacian}, and integrating by parts twice, we then have that
\begin{align*}
	&\ip<\DeltaS \scopnki^{(a)}, \: \scopmjh^{(a + \tilde a)}>_{\scop^{(a + \tilde a)}} \\
	&\quad = \Big( \int_0^{2\pi} \: \chki(\theta) \: \chjh(\theta) \: \D \theta \Big) \\
	&\quad \quad \quad \quad \cdot \: \Big( \int_\alpha^1 \: \genjacmmj^{(a + \tilde a, 2j)} \: \genjacw^{(a + \tilde a), k+j)} \: \Big\{-k(k+1) \genjacnmk^{(a,2k)} - 2(k+1) z \: \genjacnmk^{(a,2k) \: \prime} \\
	&\quad \quad \quad \quad \quad \quad \quad \quad \quad \quad \quad \quad \quad \quad \quad \quad \quad + \rho(z)^2 \genjacnmk^{(a,2k) \: \prime \prime} \Big\} \: \D z \Big) \\
	&\quad = \pi \: \delta_{k,j} \: \delta_{i,h} \: \int_\alpha^1 \: \genjac_{m-k}^{(a+\tilde a,2k)} \: \genjacw^{(a+\tilde a, 0)} \Big( -k(k+1) \genjacnmk^{(a,2k)} \: \rho^{2k} + \ddx{z} [\genjacnmk^{(a,2k) \: \prime} \: \rho^{2(k+1)}] \Big) \: \D z \\
	&\quad = \pi \: \delta_{k,j} \: \delta_{i,h} \: \int_\alpha^1 \: \Big\{ -k(k+1) \genjac_{m-k}^{(a+\tilde a,2k)} \: \genjacnmk^{(a,2k)} \: \genjacw^{(a+\tilde a, 2k)} \\
	&\quad \quad \quad \quad \quad \quad \quad \quad \quad \quad - \genjacnmk^{(a,2k) \: \prime} \genjacw^{(a+\tilde a-1, 2k)} \: [\genjac_{m-k}^{(a+\tilde a,2k) \: \prime} \: \genjacw^{(1,0)} + (a + \tilde a) \genjac_{m-k}^{(a+\tilde a,2k)}] \Big\} \: \D z \\
	&\quad = \pi \: \delta_{k,j} \: \delta_{i,h} \: \int_\alpha^1 \: \genjacnmk^{(a,2k)} \: \genjacw^{(a, 2k)} \: r_{m-k+\tilde a} \: \D z
\end{align*}
where $r_{m-k+\tilde a}$ is a degree $m - k + \tilde a$ polynomial in $z$, and so the above is zero for $n  - k > m - k + \tilde a \iff m < n - \tilde a$.

For the operator $\calL_W^{(a)\to(a-\tilde a)}$ for the surface Laplacian on a weighted space, the entries of the operator are given by the coefficients in the expansion 
\begin{align*}
	\DeltaS \big(\genjacw^{(a,0)} \: \scopnki^{(a)} \big) = \sum_{m=0}^{n} \sum_{j=0}^m \sum_{h=0}^1 c_{m,j,h} \: \genjacw^{(a-\tilde a,0)} \: \scopmjh^{(a - \tilde a)}, 
\end{align*}	
where the coefficients are
\begin{align*}
	c_{m,j,h} = \norm{\scopmjh^{(a - \tilde a)}}^{-2}_{\scop^{(a - \tilde a)}} \ip<\DeltaS \big(\genjacw^{(a,0)} \: \scopnki^{(a)} \big), \: \scopmjh^{(a - \tilde a)}>_{\scop^{(0)}}.
\end{align*}
Using \bsrefeqn{eqn:woplaplacian}, and integrating by parts thrice, we then have that
\begin{align*}
	&\ip<\DeltaS \big(\genjacw^{(a,0)} \: \scopnki^{(a)} \big), \: \scopmjh^{(a - \tilde a)}>_{\scop^{(0)}} \\
	&\quad = \Big( \int_0^{2\pi} \: \chki(\theta) \: \chjh(\theta) \: \D \theta \Big) \\
	&\quad \quad  \cdot \: \Big( \int_\alpha^1 \: \genjacmmj^{(a - \tilde a, 2j)} \: \genjacw^{(a - 2, k+j)} \: \Big\{\genjacnmk^{(a,2k)} [-k(k+1) \genjacw^{(2,0)} - 2a(k+1)z \: \genjacw^{(1,0)} + a (a-1) \rho^2] \\
	&\quad \quad \quad \quad \quad \quad \quad \quad \quad \quad \quad \quad \quad \quad \quad \quad \quad + \genjacnmk^{(a,2k) \: \prime} [-2(k+1) z \: \genjacw^{(2,0)} + 2a \genjacw^{(1,2)}] \\
	&\quad \quad \quad \quad \quad \quad \quad \quad \quad \quad \quad \quad \quad \quad \quad \quad \quad + \genjacnmk^{(a,2k) \: \prime \prime} \: \genjacw^{(2,2)} \Big\} \: \D z \Big) \\
	&\quad = \pi \: \delta_{k,j} \: \delta_{i,h} \: \int_\alpha^1 \: \Big\{ \genjac_{m-k}^{(a - \tilde a, 2k)} \: \genjacnmk^{(a,2k)} \: \genjacw^{(a - 2, 2k)} \: [-k(k+1) \genjacw^{(2,0)} - 2a(k+1)z \: \genjacw^{(1,0)} + a (a-1) \rho^2] \\
	&\quad \quad \quad \quad \quad \quad \quad \quad \quad \quad + a \: \genjacnmk^{(a,2k) \: \prime} \: \genjac_{m-k}^{(a - \tilde a, 2k)} \: \genjacw^{(a - 1, 2k+2)} \\
	&\quad \quad \quad \quad \quad \quad \quad \quad \quad \quad + \genjac_{m-k}^{(a - \tilde a, 2k)} \: \ddx{z} [\genjacnmk^{(a,2k) \: \prime} \: \genjacw^{(a, 2k+2)}] \Big\} \: \D z \\
	&\quad = \pi \: \delta_{k,j} \: \delta_{i,h} \: \int_\alpha^1 \: \Big\{ \genjac_{m-k}^{(a - \tilde a, 2k)} \: \genjacnmk^{(a,2k)} \: \genjacw^{(a - 2, 2k)} \: [-k(k+1) \genjacw^{(2,0)} - 2a(k+1)z \: \genjacw^{(1,0)} + a (a-1) \rho^2] \\
	&\quad \quad \quad \quad \quad \quad \quad + a \: \genjacnmk^{(a,2k) \: \prime} \: \genjac_{m-k}^{(a - \tilde a, 2k)} \: \genjacw^{(a - 1, 2k+2)} \\
	&\quad \quad \quad \quad \quad \quad \quad + \genjacnmk^{(a,2k)} \: \genjacw^{(a-1, 2k)} [\genjac_{m-k}^{(a - \tilde a, 2k) \: \prime \prime} \: \genjacw^{(1, 2)} + a \genjac_{m-k}^{(a - \tilde a, 2k) \: \prime} \: \rho^2 - 2(k+1) z \: \genjac_{m-k}^{(a - \tilde a, 2k)} \: \genjacw^{(1, 0)}] \Big\} \: \D z \\
	&\quad = \pi \: \delta_{k,j} \: \delta_{i,h} \: \int_\alpha^1 \: \Big\{ \genjac_{m-k}^{(a - \tilde a, 2k)} \: \genjacnmk^{(a,2k)} \: \genjacw^{(a - 2, 2k)} \: [-k(k+1) \genjacw^{(2,0)} - 2a(k+1)z \: \genjacw^{(1,0)} + a (a-1) \rho^2] \\
	&\quad \quad \quad \quad \quad \quad \quad \quad \quad \quad + \genjacnmk^{(a,2k)} \: \genjacw^{(a - 1, 2k+2)} [\genjac_{m-k}^{(a - \tilde a, 2k) \: \prime \prime} \: \rho^2 - 2(k+1) z \: \genjac_{m-k}^{(a - \tilde a, 2k) \: \prime} ] \\
	&\quad \quad \quad \quad \quad \quad \quad \quad \quad \quad + a [\genjacnmk^{(a,2k)} \: \genjac_{m-k}^{(a - \tilde a, 2k) \: \prime} + \genjac_{m-k}^{(a - \tilde a, 2k)} \: \genjacnmk^{(a, 2k) \: \prime}] \: \genjacw^{(a-1, 2k+2)} \Big\} \: \D z \\
	&\quad = \pi \: \delta_{k,j} \: \delta_{i,h} \: \int_\alpha^1 \: \Big\{ \genjac_{m-k}^{(a - \tilde a, 2k)} \: \genjacnmk^{(a,2k)} \: \genjacw^{(a - 2, 2k)} \: [-k(k+1) \genjacw^{(2,0)} - 2a(k+1)z \: \genjacw^{(1,0)} + a (a-1) \rho^2] \\
	&\quad \quad \quad \quad \quad \quad \quad \quad \quad \quad + \genjacnmk^{(a,2k)} \: \genjacw^{(a - 1, 2k+2)} [\genjac_{m-k}^{(a - \tilde a, 2k) \: \prime \prime} \: \rho^2 - 2(k+1) z \: \genjac_{m-k}^{(a - \tilde a, 2k) \: \prime} ] \\
	&\quad \quad \quad \quad \quad \quad \quad \quad \quad \quad - a \genjacnmk^{(a,2k)} \: \genjac_{m-k}^{(a - \tilde a, 2k)} \: \genjacw^{(a-2, 2k)} \: [(a-1) \rho^2 - 2(k+1) z \: \genjacw^{(1, 0)}] \Big\} \: \D z \\
	&\quad = \pi \: \delta_{k,j} \: \delta_{i,h} \: \int_\alpha^1 \: \genjacnmk^{(a,2k)} \: \genjacw^{(a, 2k)} \: r_{m-k} \: \D z
\end{align*}
where $r_{m-k}$ is a degree $m - k$ polynomial in $z$, and so the above is zero for $n - k > m - k \iff m < n$.

Finally, fix $a = 1$. For the operator $\Delta^{(1)}_W$ for the Laplacian on the weighted space, the entries of the operator are given by the coefficients in the expansion $\DeltaS \big(\genjacw^{(1,0)} \: \scopnki^{(1)} \big) = \sum_{m=0}^{n+2} \sum_{j=0}^m \sum_{h=0}^1 c_{m,j,h} \: \scopmjh^{(1)}$, where the coefficients are given by
\begin{align*}
	c_{m,j,h} = \norm{\scopmjh^{(1)}}^{-2}_{\scop^{(1)}} \ip<\DeltaS \big(\genjacw^{(1,0)} \: \scopnki^{(1)} \big), \: \scopmjh^{(1)}>_{\scop^{(1)}}.
\end{align*}
Using \bsrefeqn{eqn:woplaplacian} with $a = 1$, and \bsreflemma{lemma:Rsecondderivative}, we then have that
\begin{align*}
	&\ip<\DeltaS \big(\genjacw^{(1,0)} \: \scopnki^{(1)} \big), \: \scopmjh^{(1)}>_{\scop^{(1)}} \\
	&\quad = \Big( \int_0^{2\pi} \: \chki(\theta) \: \chjh(\theta) \: \D \theta \Big) \\
	&\quad \quad \quad \quad \cdot \: \Big( \int_\alpha^1 \: \genjacmmj^{(1,2j)} \: \Big\{ \genjacnmk^{(1, 2k)} \: [- k^2 \genjacw^{(1, k)} - \genjacw^{(1, k)} - 2(k + 1)z \genjacw^{(0, k)}] \\
	&\quad \quad \quad \quad \quad \quad \quad \quad \quad \quad \quad \quad + \genjacnmk^{(1, 2k) \: \prime} \: [2 \genjacw^{(0, k+2)} - 2(k + 1)z \genjacw^{(1, k)}] \\
	&\quad \quad \quad \quad \quad \quad \quad \quad \quad \quad \quad \quad + \genjacnmk^{(1, 2k) \: \prime \prime} \: \genjacw^{(1, k+2)} \Big\} \: \genjacw^{(1, j)} \: \D z \Big) \\
	&\quad = \pi \: \delta_{k,j} \: \delta_{i,h} \: \int_\alpha^1 \: \genjac_{m-k}^{(1,2k)} \: \Big\{ \genjacnmk^{(1, 2k)} [-k(k+1) \genjacw^{(1, 0)} - 2(k+1)z + c_{n,k}] \\
	&\quad \quad \quad \quad \quad \quad \quad \quad \quad \quad \quad \quad + c_{n-1,k} \genjac_{n-k-1}^{(1, 2k)} + c_{n+1,k} \genjac_{n-k+1}^{(1, 2k)} \Big\} \: \genjacw^{(1, 2k)} \: \D z \\
	&\quad = - \pi \: \delta_{k,j} \: \delta_{i,h} (\delta_{m,n-1} + \delta_{m,n} + \delta_{m,n+1}) \: \int_\alpha^1 \: \Big\{ \genjacnmk^{(1,2k)} \: \genjac_{m-k}^{(1, 2k)} (k(k+1) \genjacw^{(1, 0)} + 2(k+1)z) \\ 
	&\quad \quad \quad \quad \quad \quad \quad \quad \quad \quad \quad \quad \quad \quad \quad \quad \quad \quad \quad \quad \quad + \genjacnmk^{(1,2k) \: \prime} \: \genjac_{m-k}^{(1,2k) \: \prime} \: \genjacw^{(2, 2(k+1))} \Big\} \: \D z
\end{align*}
where the $c_{n-1,k}, c_{n,k}, c_{n+1,k}$ are those derived in \bsreflemma{lemma:Rsecondderivative}.
\end{proof}

By applying these differential operators, we are (in some cases) incrementing or decrementing the parameter value $a$. It is therefore necessary to also be able to raise or lower the parameter by way of an independent operator. There exist conversion matrix operators that do exactly this, transforming the OPs from one (weighted or non-weighted) parameter space to another.
\begin{definition}\label{def:parametertransformationoperators}
Define the operator matrices $T^{(a)\to(a+\tilde a)}, \quad T_W^{(a)\to(a-\tilde a)}$ for conversion between non-weighted spaces and weighted spaces respectively according to
\begin{align*}
	\bigscopNa(x,y,z) &= \Big(T^{(a)\to(a+\tilde a)} \Big)^\top \: \bigscopN^{(a+\tilde a)}(x,y,z) \\
	\bigWNa(x,y,z) &= \Big(T_W^{(a)\to(a-\tilde a)} \Big)^\top \: \bigW_N^{(a-\tilde a)}(x,y,z)
\end{align*}
\end{definition}

\begin{lemma}\label{lemma:sparsityofparametertransformationoperators}
The operator matrices in Definition \ref{def:parametertransformationoperators} are sparse, with banded-block-banded structure. More specifically:
\begin{itemize}
	\item $T^{(a)\to(a+\tilde a)}$ is block-diagonal with sub-block bandwidths $(0,2\tilde a)$
	\item $T_W^{(a)\to(a-\tilde a)}$ is block-diagonal with sub-block bandwidths $(2\tilde a, 0)$
\end{itemize}
\end{lemma}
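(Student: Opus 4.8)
The plan is to reduce the statement to a one-dimensional fact about conversion between the univariate families $\genjac_n^{(a,2k)}$ and $\genjac_n^{(a\pm\tilde a,2k)}$, using the product structure $\scopnkia = \genjacnmk^{(a,2k)}(z)\,\rho(z)^k\,\chki(\theta)$ from Definition~\ref{def:constuction}. First I would record that any coefficient of $\scopnkia$ (respectively of $\genjacw^{(a,0)}\scopnkia$) in the expansion against $\scopmjh^{(a\pm\tilde a)}$ (respectively $\genjacw^{(a-\tilde a,0)}\scopmjh^{(a-\tilde a)}$) carries a factor $\int_0^{2\pi}\chki(\theta)\chjh(\theta)\,\D\theta = \pi\,\delta_{k,j}\,\delta_{i,h}$, by orthonormality of the circular harmonics (Definition~\ref{def:Ydefinition}). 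This immediately forces block-diagonality in the Fourier mode $k$ and diagonality in the index $i$, so what remains is, for each fixed $k$ and $i$, to bound the set of degrees $m$ for which the coefficient linking degrees $n$ and $m$ is nonzero.

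For $\calT := T^{(a)\to(a+\tilde a)}$ I would expand $\scopnkia = \sum_m c_m\,\scop^{(a+\tilde a)}_{m,k,i}$, so that $c_m = \norm{\scop^{(a+\tilde a)}_{m,k,i}}_{\scop^{(a+\tilde a)}}^{-2}\,\ip<\scopnkia,\scop^{(a+\tilde a)}_{m,k,i}>_{\scop^{(a+\tilde a)}}$. After the $\theta$-integration the surviving factor is $\int_\alpha^1 \genjacnmk^{(a,2k)}(z)\,\genjac_{m-k}^{(a+\tilde a,2k)}(z)\,\genjacw^{(a+\tilde a,2k)}(z)\,\D z$; writing $\genjacw^{(a+\tilde a,2k)} = (z-\alpha)^{\tilde a}\,\genjacw^{(a,2k)}$ turns the integrand into $\genjacnmk^{(a,2k)}$ multiplied by a polynomial of degree $(m-k)+\tilde a$, integrated against $\genjacw^{(a,2k)}$, which vanishes by orthogonality whenever $(m-k)+\tilde a < n-k$, i.e. $m < n-\tilde a$. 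Together with the trivial degree bound $m \le n$ (since $\deg\scopnkia = n$ and the $\scop^{(a+\tilde a)}$ are a graded basis), nonzero coefficients are confined to $n-\tilde a \le m \le n$. Since Definition~\ref{def:parametertransformationoperators} reads $\bigscopNa = \bigl(T^{(a)\to(a+\tilde a)}\bigr)^\top\,\bigscopN^{(a+\tilde a)}$, the entry $[m,k,i],[n,k,i]$ of $T^{(a)\to(a+\tilde a)}$ equals $c_m$, which is upper triangular in degree with upper half-bandwidth $\tilde a$; within each $k$-block (for $k\ge1$) the two values $i\in\{0,1\}$ are interleaved and the conversion preserves $i$, so this doubles to an upper sub-block-bandwidth $2\tilde a$ and a lower sub-block-bandwidth $0$, giving $(0,2\tilde a)$ (the $k=0$ block has no interleaving and is even sparser).

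For $T_W^{(a)\to(a-\tilde a)}$ the computation is symmetric. Here $\genjacw^{(a,0)}\scopnkia = \sum_m c_m\,\genjacw^{(a-\tilde a,0)}\scop^{(a-\tilde a)}_{m,k,i}$, equivalently $(z-\alpha)^{\tilde a}\scopnkia = \sum_m c_m\,\scop^{(a-\tilde a)}_{m,k,i}$, so after the $\theta$-integration $c_m$ is proportional to $\int_\alpha^1 \genjacnmk^{(a,2k)}(z)\,\genjac_{m-k}^{(a-\tilde a,2k)}(z)\,\genjacw^{(a,2k)}(z)\,\D z$; orthogonality of $\genjacnmk^{(a,2k)}$ against $\genjacw^{(a,2k)}$ kills this for $m-k < n-k$, i.e. $m < n$, while the degree bound is now $m \le n+\tilde a$ because $(z-\alpha)^{\tilde a}\scopnkia$ has degree $n+\tilde a$. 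Thus nonzero coefficients lie in $n \le m \le n+\tilde a$, which after the transpose in Definition~\ref{def:parametertransformationoperators} and the interleaving of $i$ yields a lower-triangular structure with sub-block-bandwidths $(2\tilde a,0)$.

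The content is essentially bookkeeping; the two points that need care are (i) that $\tilde a$ is a positive integer, so $(z-\alpha)^{\tilde a}$ is a genuine polynomial of degree $\tilde a$ and all the expansions terminate (and, for the weighted case, that $a \ge \tilde a$ so $\scop^{(a-\tilde a)}$ is defined), and (ii) correctly matching the \emph{orientation} of the bands to the transpose in the operator definitions and to the interleaved $i=0,1$ ordering inside each $k$-block, which is exactly what converts the one-dimensional half-bandwidth $\tilde a$ into $2\tilde a$. I do not expect any genuine obstacle beyond keeping these conventions straight.
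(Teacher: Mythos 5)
Your proposal is correct and follows essentially the same route as the paper's proof: factor the inner product into the $\theta$-integral (giving $\pi\,\delta_{k,j}\,\delta_{i,h}$ and hence block-diagonality) and the $z$-integral, then absorb $(z-\alpha)^{\tilde a}$ into the test polynomial and invoke orthogonality of $\genjacnmk^{(a,2k)}$ together with the trivial degree bound to confine $m$ to a window of width $\tilde a$. You in fact supply more detail than the paper does (which dismisses the weighted case as ``similar''), and your bookkeeping of how the interleaved $i=0,1$ ordering doubles the univariate half-bandwidth $\tilde a$ to the sub-block-bandwidth $2\tilde a$ is accurate.
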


\begin{proof}
We proceed with the case for the non-weighted operators $T^{(a)\to(a+\tilde a)}$. Since $\{\scopmjh^{(a+\tilde{a})}\}$ for $m = 0,\dots,n$, $j = 0,\dots,m$, $h = 0,1$ is an orthogonal basis for any degree $n$ polynomial, we can expand $\scopnkia = \sum_{m=0}^{n} \sum_{j=0}^m t_{m,j} \: \scopmjh^{(a+\tilde{a})}$. The coefficients of the expansion are then the entries of the operator matrix. We will show that the only non-zero coefficients are for $k = j$, $i = h$ and $m \ge  n - \tilde a$. Note that
\begin{align*}
	t_{m,j} = \norm{\scopmjh^{(a+\tilde{a})}}^{-2}_{\scop^{(a+\tilde a)}} \: \ip< \scopnkia, \scopmjh^{(a+\tilde{a})}>_{\scop^{(a+\tilde a)}}.
\end{align*}
where
\begin{align*}
	\ip< \scopnkia, \scopmjh^{(a+\tilde{a})}>_{\scop^{(a+\tilde a)}} &= \Big( \int_0^{2\pi} \: \chki(\theta) \: \chjh(\theta) \: \D \theta \Big) \: \cdot \: \Big( \int_\alpha^1 \: \genjacnmk^{(a, 2k)} \: \genjacmmj^{(a+\tilde a,2j)} \: \rho^{k+j} \: \genjacw^{(a + \tilde a, 0)} \: \D z \Big) \\
	&= \pi \: \delta_{k,j} \: \delta_{i,h} \: \int_\alpha^1 \: \genjacnmk^{(a, 2k)} \: \genjac_{m-k}^{(a+\tilde a,2k)} \: \genjacw^{(a + \tilde a, 2k)} \: \D z
\end{align*}
which is zero for $n > m + \tilde a \iff m < n - \tilde a$. The sparsity argument for the weighted parameter transformation operator follows similarly.
\end{proof}

\begin{figure}[t]
	\centering % <-- added
	\begin{subfigure}{0.45\textwidth}
		\includegraphics[scale=0.3]{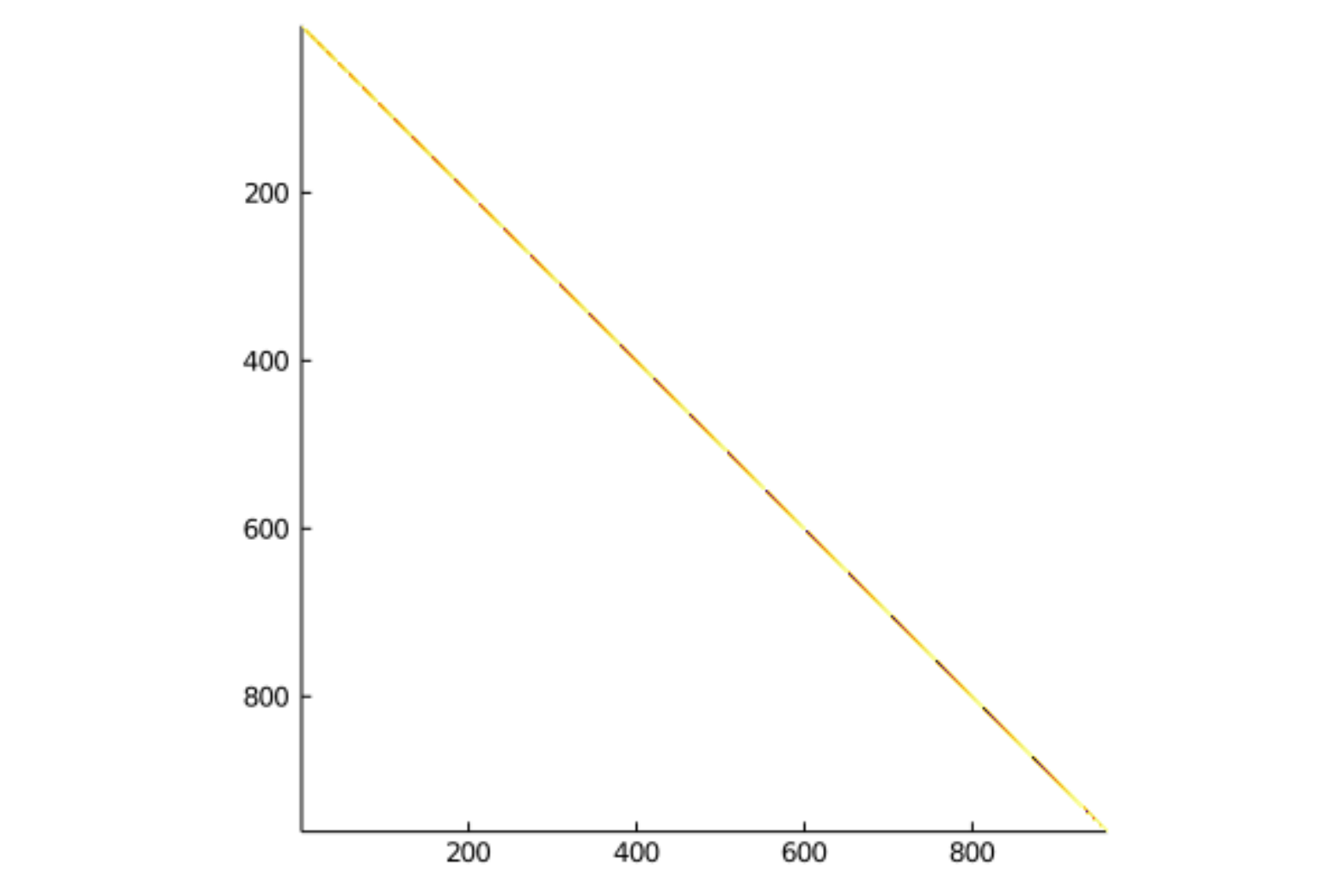}
		\centering
		\caption{The Laplace-Beltrami operator $\Delta^{(1)}_W$}
	\end{subfigure}\hfil % <-- added
	\begin{subfigure}{0.45\textwidth}
		\includegraphics[scale=0.3]{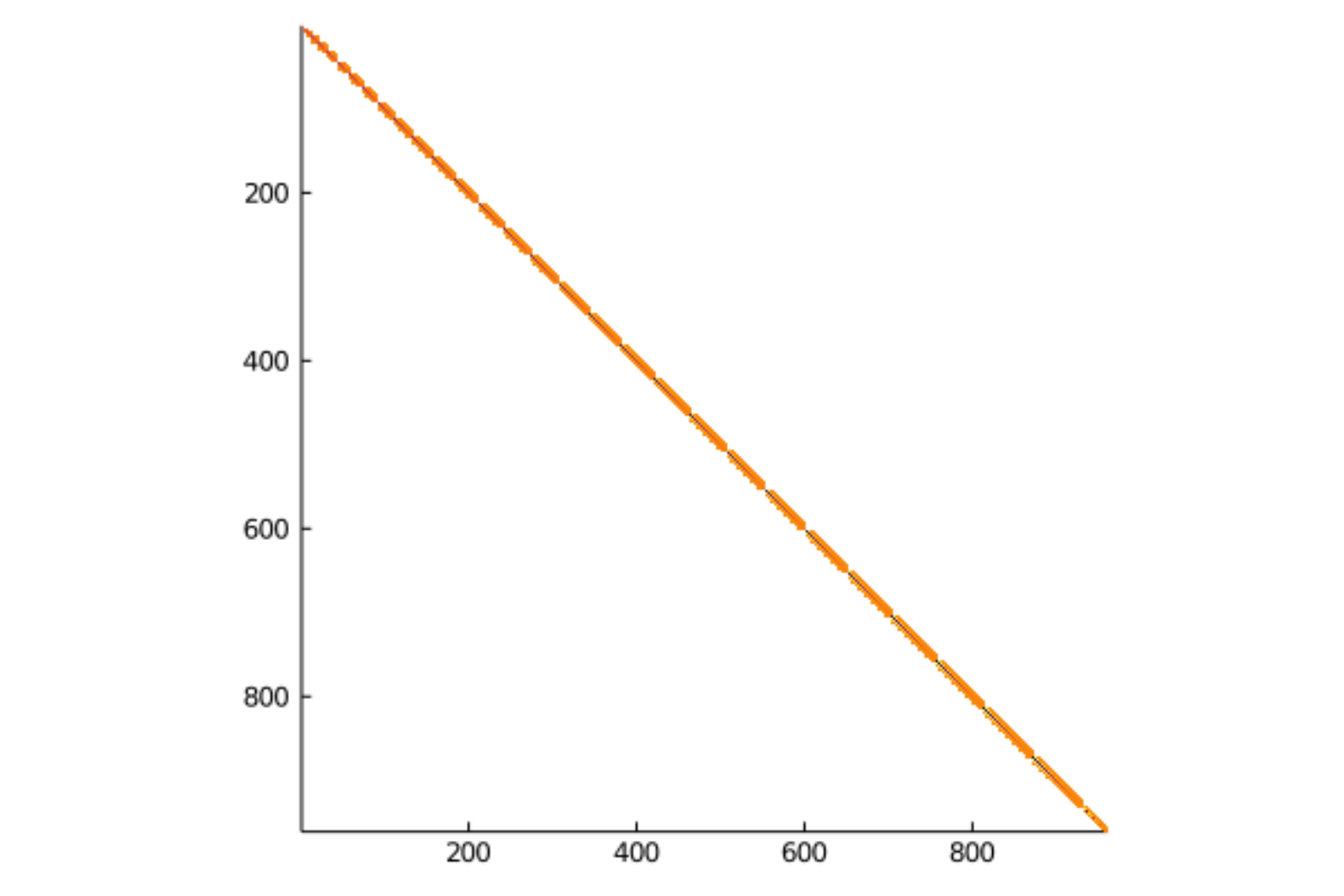}
		\centering
		\caption{The $\rho^2$-factored Laplace-Beltrami operator $D_\varphi^{(0)} \: W_\varphi^{(1)} + T^{(0)\to(1)} \: T_W^{(1)\to(0)} \: (D_\theta)^2$}
	\end{subfigure}\hfil % <-- added

	\medskip
	\begin{subfigure}{0.45\textwidth}
		\includegraphics[scale=0.3]{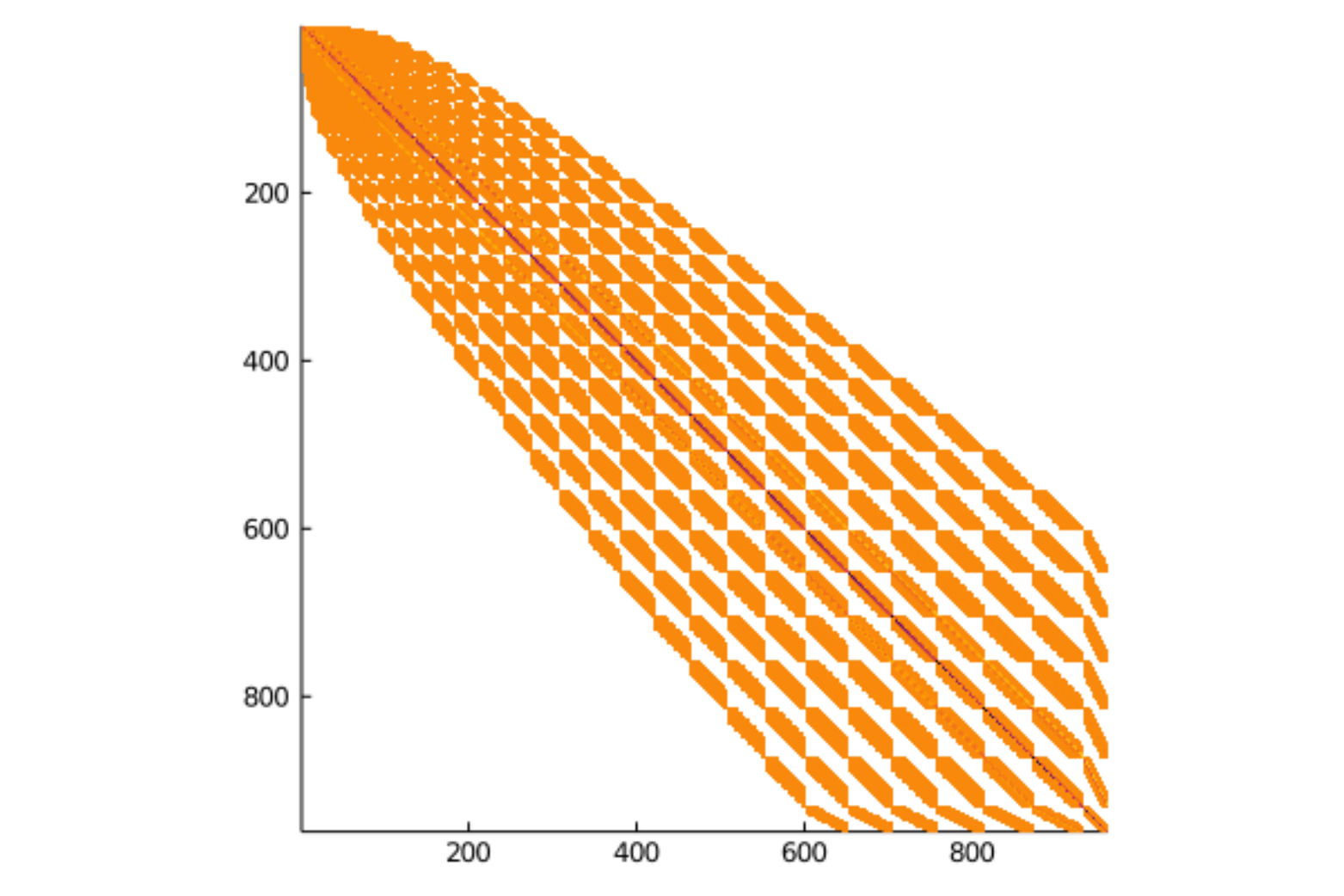}
		\centering
		\caption{A variable coefficient Helmholtz operator}
	\end{subfigure}\hfil % <-- added
	\begin{subfigure}{0.45\textwidth}
		\includegraphics[scale=0.3]{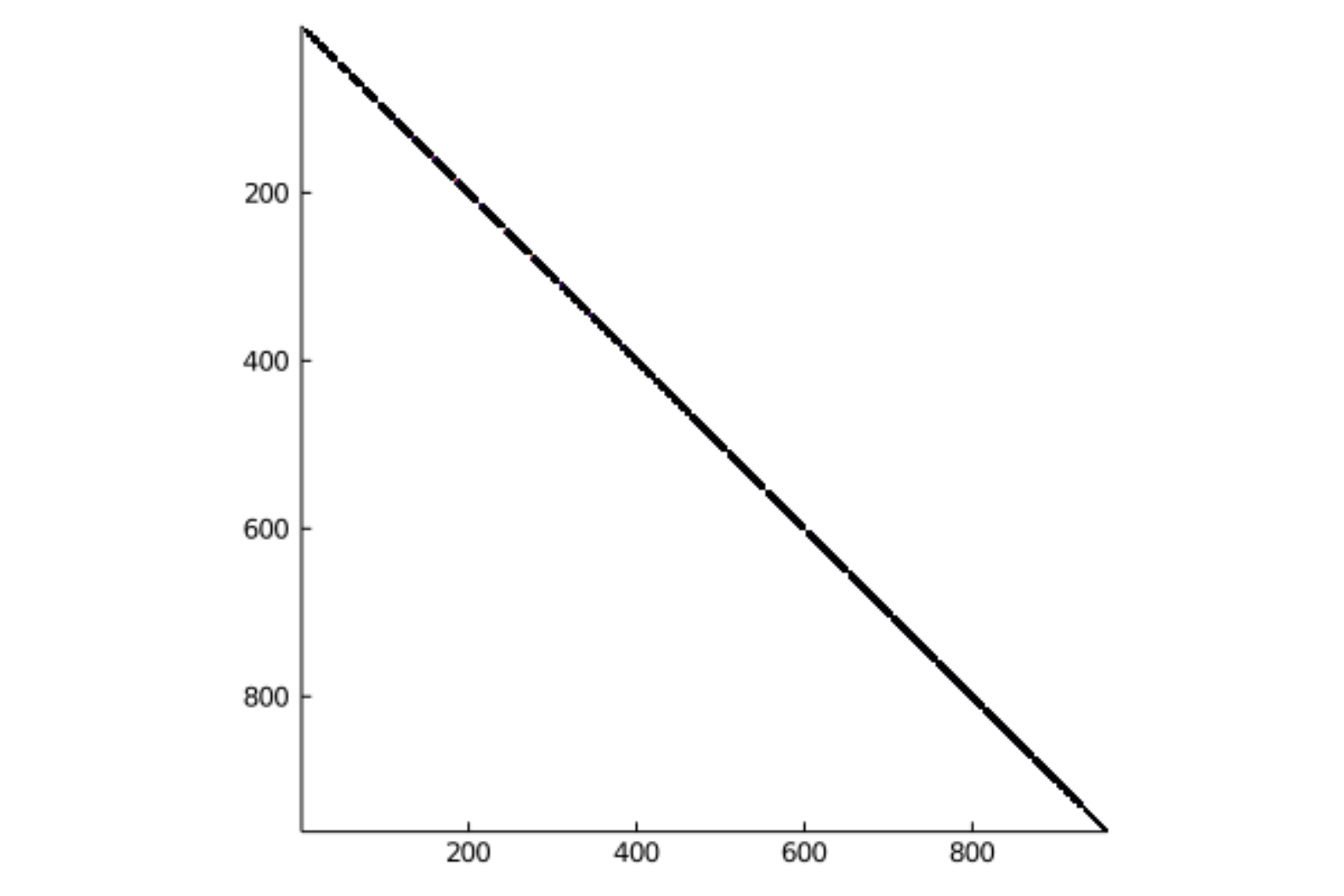}
		\centering
		\caption{The Biharmonic operator $\calB_W^{(2)}$} 
	\end{subfigure}\hfil % <-- added
	\caption{\enquote{Spy} plots of (differential) operator matrices, showing their sparsity. For (c), the the weighted variable coefficient Helmholtz operator is $\Delta^{(1)}_W + k^2 \: T^{(0)\to(1)} \: V({J_x^{(0)}}^\top, {J_y^{(0)}}^\top, {J_z^{(0)}}^\top) \: T_W^{(1)\to(0)}$ for $v(x,y,z) = 1 - (3(x-x_0)^2 + 5(y-y_0)^2 + 2(z-z_0)^2)$ where $(x_0, z_0) := (0.7, 0.2)$, $y_0 := \sqrt{1 - x_0^2 - z_0^2}$ and $k = 200$.}
	\label{fig:sparsity}
\end{figure}

\subsection{Further partial differential operators}\label{subsection:furtherdiffoperators}
General linear partial differential operators with polynomial variable coefficients can be constructed by composing the sparse representations for partial derivatives, conversion between bases, and Jacobi operators. As a canonical example, we can obtain the matrix operator for the $\rho^2$-factored spherical Laplacian $\rho(z)^2 \: \DeltaS$, that will take us from coefficients for expansion in the weighted space $\bigW_N^{(1)}(x,y,z) = \genjacw^{(1,0)}(z) \: \bigscopN^{(1)}(x,y,z)$ to coefficients in the non-weighted space $\bigscopN^{(1)}(x,y,z)$. Note that this construction will ensure the imposition of the Dirichlet zero boundary conditions on $\Omega$, similar to how the Dirichlet zero boundary conditions would be imposed for the operator $\Delta^{(1)}_W$ in Definition \ref{def:differentialoperators}. The matrix operator for this $\rho^2$-factored spherical Laplacian acting on the coefficients vector is then given by
\begin{align*}
	D_\varphi^{(0)} \: W_\varphi^{(1)} + T^{(0)\to(1)} \: T_W^{(1)\to(0)} \: (D_\theta)^2.
\end{align*}
Importantly, this operator will have banded-block-banded structure, and hence will be sparse, as seen in Figure \ref{fig:sparsity}.

Another desirable operator is the Biharmonic operator $\DeltaS^2$, for which we assume zero Dirichlet and Neumann conditions. That is, 
\begin{align*}
	u(x,y,z) &= 0, \quad \pfpx{n}{u}(x,y,z) = \nabla_S u(x,y,z) \cdot \unitvec{n}(x,y,z) = 0 \quad \text{for } (x,y,z) \in \partial\Omega
\end{align*}
where $\partial\Omega$ is the $z=\alpha$ boundary, and $\unitvec{n}(x,y,z)$ is the outward unit normal vector at the point $(x,y,z)$ on the boundary, i.e. $\unitvec{n}(x,y,z) = \unitvec{n}(\vec x) := {\vec x \over \norm{\vec x}} = \vec x$. The matrix operator for the Biharmonic operator will take us from coefficients in the space $\bigW^{(2)}(x,y,z)$ to coefficients in the space $\bigscopN^{(2)}(x,y,z)$. To construct this, we can simply multiply together two of the spherical Laplacian operators defined in \bsrefdef{def:differentialoperators}, namely $\calL^{(0)\to(2)}$ and $\calL_W^{(2)\to(0)}$:
\begin{align*}
	\calB^{(2)}_W := \calL^{(0)\to(2)} \: \calL_W^{(2)\to(0)}.
\end{align*}
Since the operator $\calL_W^{(2)\to(0)}$ acts on coefficients in the $\bigW^{(2)}(x,y,z)$ space, we ensure that we satisfy the zero Dirichlet and Neumann boundary conditions -- such a function could be written $u(x,y,z) = \genjacw^{(2,0)}(z) \: \tilde u(x,y,z)$ and thus its spherical gradient would be zero on the boundary $z = \alpha$. This allows us to apply the $\calL^{(0)\to(2)}$ operator after, safe in the knowledge that boundary conditions have been accounted for. The sparsity and structure of this biharmonic operator are seen in Figure \ref{fig:sparsity}.

\section{Computational aspects}\label{Section:Computation}

In this section we discuss how to expand and evaluate functions in our proposed basis, and take advantage of the sparsity structure in partial differential operators in practical computational applications.

\subsection{Constructing $\genjac_n^{(a,b)}(x)$}

It is possible to recursively obtain the recurrence coefficients for the $\{\genjac_n^{(a,b)}\}$ OPs in (\ref{eqn:Rrecurrence}), see \cite{snowball2019sparse}, by careful application of the Christoffel--Darboux formula \cite[18.2.12]{DLMF}.

\subsection{Quadrature rule on the spherical cap}\label{subsection:quadrule}

In this section we construct a quadrature rule exact for polynomials on the spherical cap $\Omega$ that can be used to expand functions in the OPs $\scopnkia(x,y,z)$ for a given parameter $a$.

\begin{theorem}\label{Theorem:quadrule}
Let $M_1, M_2 \in \N$ and denote the $M_1$ Gauss quadrature nodes and weights on $[\alpha,1]$ with weight $(t - \alpha)^a$ as $(t_j, w_j^{(t)})$. Further, denote the $M_2$ Gauss quadrature nodes and weights $[-1,1]$ with weight $(1 - x^2)^{-\half}$ as $(s_j, w_j^{(s)})$.
Define for $j = 1,\dots,M_1, \: l=1,\dots,M_2$:
\begin{align*}
	\big(x_{l+(j-1)M_2}, \: y_{l+(j-1)M_2} \big) &:= \rho(t_j) \: \mathbf{s}_l, \\
	z_{l+(j-1)M_2} &:= t_j, \\
	w_{l+(j-1)M_2} &:= w_j^{(t)} w_l^{(s)}.
\end{align*}
Let $f(x,y,z)$ be a function on $\Omega$, and $N \in \N$. The quadrature rule is then
\begin{align*}
	\int_\Omega f(x,y,z) \: \genjacw^{(a,0)}(z) \: \D A \approx \sum_{j=1}^{M} w_j \: \big[ f(x_j, y_j, z_j) + f(-x_j, -y_j, z_j) \big],
\end{align*}
where $M = M_1 \: M_2$, and the quadrature rule is exact if $f(x,y,z)$ is a polynomial of degree $\le N$ with $M_1 \ge \half (N+1), M_2 \ge N+1$.
\end{theorem}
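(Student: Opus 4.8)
The plan is to use linearity to reduce to a single monomial, then exploit the product structure of the node set via Proposition~\ref{prop:construction}, matching the $z$-integral with the $M_1$-point Gauss rule and the $\theta$-integral with the $M_2$-point Gauss--Chebyshev rule, while the $(x,y)\mapsto(-x,-y)$ symmetrisation disposes of the angular parity. By linearity it suffices to check exactness for $f(x,y,z)=x^p y^q z^r$ with $p+q+r\le N$. Writing $\mathbf{s}_l:=\big(s_l,(1-s_l^2)^{1/2}\big)$ for the unit-circle point attached to the Gauss--Chebyshev node $s_l$, the quadrature evaluates $f$ at the pairs of points $\big(\pm\rho(t_j)\mathbf{s}_l,\,t_j\big)$ with shared weight $w_j^{(t)}w_l^{(s)}$, and by Proposition~\ref{prop:construction} the exact value is
\begin{align*}
	\int_\Omega f\:\genjacw^{(a,0)}(z)\:\D A = \int_\alpha^1 z^r\,(z-\alpha)^a\Big(\int_0^{2\pi}\rho(z)^{p+q}\cos^p\theta\,\sin^q\theta\:\D\theta\Big)\D z .
\end{align*}

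Next I would split into cases on the parities of $p$ and $q$. If $p+q$ is odd, then $f(-x,-y,z)=-f(x,y,z)$, so $f(x_j,y_j,z_j)+f(-x_j,-y_j,z_j)=0$ for every $j$ and the rule returns $0$; the exact integral is also $0$ since $\theta\mapsto\theta+\pi$ negates the angular integrand. If $p+q$ is even but $p,q$ are both odd, then summing the values at $\pm\rho(t_j)\mathbf{s}_l$ leaves the angular factor proportional to $s_l^{\,p}(1-s_l^2)^{(q-1)/2}(1-s_l^2)^{1/2}$, an odd function of $s_l$; since the Gauss--Chebyshev nodes are symmetric about the origin with equal weights, the $l$-sum vanishes, and the exact integral vanishes too because $\cos^p\theta\sin^q\theta$ is odd under $\theta\mapsto-\theta$. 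In these two cases nothing beyond these symmetries is used.

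The only case where the exactness degrees enter is $p,q$ both even. There $\rho(z)^{p+q}=(1-z^2)^{(p+q)/2}$ is a polynomial in $z$, so the $z$-integrand is $(z-\alpha)^a$ times a polynomial of degree $p+q+r\le N$, and $\cos^p\theta\sin^q\theta=s^p(1-s^2)^{q/2}$ with $s=\cos\theta$ is a polynomial in $s$ of degree $p+q\le N$. The quadrature sum then factors as $\big(\sum_j w_j^{(t)}(1-t_j^2)^{(p+q)/2}t_j^r\big)\cdot\big(2\sum_l w_l^{(s)}s_l^{\,p}(1-s_l^2)^{q/2}\big)$. The first factor equals $\int_\alpha^1(1-z^2)^{(p+q)/2}z^r(z-\alpha)^a\,\D z$ by exactness of the $M_1$-point Gauss rule, which needs $2M_1-1\ge p+q+r$, ensured by $M_1\ge\half(N+1)$; the second equals $2\int_{-1}^1 s^p(1-s^2)^{q/2}(1-s^2)^{-1/2}\,\D s=\int_0^{2\pi}\cos^p\theta\sin^q\theta\,\D\theta$ by exactness of the $M_2$-point Gauss--Chebyshev rule, which needs $2M_2-1\ge p+q$, comfortably implied by $M_2\ge N+1$. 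Multiplying the two factors reproduces the displayed integral, so the rule is exact on this monomial as well; combining the cases gives exactness for all polynomials of degree $\le N$.

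I do not expect a genuine obstacle: the argument is essentially careful bookkeeping of the two parity cases and the degree counts. The one point to get right is that $\rho(z)^{p+q}$ collapses to a polynomial in $z$ and $\sin^q\theta$ to a polynomial in $\cos\theta$ precisely when the relevant exponents are even, and that in every other case both the exact integral and the quadrature are annihilated purely by the $(x,y)\mapsto(-x,-y)$ symmetrisation together with the origin-symmetry of the Gauss--Chebyshev nodes, so no exactness estimate is needed there.
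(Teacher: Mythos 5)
Your proof is correct and rests on the same two pillars as the paper's: the tensor-product structure of the node set reduces everything to the exactness of the two univariate Gauss rules, and the $(x,y)\mapsto(-x,-y)$ symmetrisation disposes of the antipodally odd part. The paper organises this via the global decomposition $f=f_e+f_o$, shows $\int_\Omega f_o\,\genjacw^{(a,0)}\,\D A=0$, and then treats $f_e$ in one stroke; you instead work monomial by monomial with a three-way parity split. The two routes are essentially equivalent, but your finer case analysis is actually the more careful one: the paper asserts that $s\mapsto f_e\big(\rho(z)s,\rho(z)\sqrt{1-s^2},z\big)$ is a polynomial in $s$ of degree $\le N$, which fails for the monomials $x^p y^q z^r$ with $p,q$ both odd (these are even under the antipodal map yet produce a residual factor $\sqrt{1-s^2}$). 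Your observation that precisely those terms are odd in $s$ and are therefore annihilated both by the Chebyshev--Gauss sum (symmetric nodes, equal weights) and by the exact integral closes that small gap, so nothing further is needed.
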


\begin{remark} 
	Note that the Gauss quadrature nodes and weights $(t_j, w_j^{(t)})$ will have to be calculated, however the Gauss quadrature nodes and weights $(s_j, w_j^{(s)})$ are simply the Chebyshev--Gauss quadrature nodes and weights given explicitly \cite[3.5.23]{DLMF} as $s_j := \cos \fpr(\frac{2j - 1}{2M_2} \pi)$, $w_j^{(s)} := \frac{\pi}{M_2}$.
\end{remark}

\begin{proof}
Let $f : \Omega \to \R$. Define the functions $f_e, f_o : \Omega \to \R$ by 
\begin{align*}
	f_e(x,y,z) &:= \half \Big(f(x, y,z) + f(-x, -y,z)\Big), \quad \forall (x,y,z) \in \Omega\\
	f_o(x,y,z) &:= \half \Big(f(x, y,z) - f(-x, -y,z)\Big), \quad \forall (x,y,z) \in \Omega
\end{align*}
so that $\xvec \mapsto f_e(\xvec, z)$ for fixed $z$ is an even function, and $\xvec \mapsto f_o(\xvec, z)$ for fixed $z$ is an odd function. Note that if $f$ is a polynomial, then $f_e(\rho(t)x, \rho(t)y, t)$ is a polynomial in $t \in [\alpha,1]$ for fixed $(x,y) \in \R^2$. 

Firstly, we note that
\begin{align*}
	\int_0^{2\pi} g\big(\cos \theta, \sin\theta\big) \: \D \theta = \int_{-1}^1 \Big( g\big(x, \sqrt{1-x^2}\big) + g\big(x, -\sqrt{1-x^2}\big) \Big) \frac{\D x}{\sqrt{1-x^2}}
\end{align*}
for some function $g$. Then, integrating the even function $f_e$ we have
\begin{align*}
	&\int_\Omega f_e(x,y,z) \: \genjacw^{(a,0)}(z) \: \D A \\
	&\quad \quad = \int_\alpha^1 \genjacw^{(a,0)}(z) \: \Big( \int_0^{2\pi} f_e\big(\rho(z)\cos\theta,\rho(z)\sin\theta, z\big) \: \D \theta \Big) \: \D z \\
	&\quad \quad = 2 \int_\alpha^1 \genjacw^{(a,0)}(z) \: \Big( \int_{-1}^1 f_e\big(\rho(z)x,\rho(z)\sqrt{1-x^2}, z\big) \: \D x \Big) \: \D z \\
	&\quad \quad \approx \int_\alpha^1 \genjacw^{(a,0)}(z) \: \Big( \sum_{l=1}^{M_2} w_l^{(s)} f_e\big(\rho(z) s_l, \rho(z) \sqrt{1 - s_l^2}, z\big) \Big) \: \D z \quad (\star) \\
	&\quad \quad \approx \sum_{j=1}^{M_1} w_j^{(t)} \sum_{l=1}^{M_2} w_l^{(s)} f_e\big(\rho(t_j) s_l, \rho(t_j) \sqrt{1 - s_l^2}, t_j\big) \quad (\star \star) \\
	&\quad \quad = \sum_{k=1}^{M_1 M_2}  w_j \: f_e(x_j, y_j, z_j).
\end{align*}
Suppose $f$ is a polynomial in $x,y,z$ of degree $N$, and hence that $f_e$ is a degree $\le N$ polynomial. It follows that $s \mapsto f_e\big(\rho(z)s,\rho(z)\sqrt{1-s^2}, z\big)$ for fixed $z$ is then a polynomial of degree $\le N$. We therefore achieve equality at $(\star)$ if $2M_2 - 1 \ge N$ and we achieve equality at $(\star \star)$ if also $2M_1 - 1 \ge N$.

Integrating the odd function $f_o$ results in
\begin{align*}
	&\int_\Omega f_o(x,y,z) \: \genjacw^{(a,0)}(z) \: \D A \\
	&\quad \quad = \int_\alpha^1 \genjacw^{(a,0)}(z) \: \Big( \int_0^{2\pi} f_o\big(\rho(z)\cos\theta,\rho(z)\sin\theta, z)\big) \: \D \theta \Big) \: \D z \\
	&\quad \quad = \int_\alpha^1 \genjacw^{(a,0)}(z) \: \Big( \int_{-1}^1 \Big[ f_o\big(\rho(z)x,\rho(z)\sqrt{1-x^2}, z\big) + f_o\big(\rho(z)x, - \rho(z)\sqrt{1-x^2}, z\big) \big] \: \D x \Big) \: \D z \\
	&\quad \quad = \int_\alpha^1 \genjacw^{(a,0)}(z) \: \Big( \int_{-1}^1 \Big[ f_o\big(\rho(z)x,\rho(z)\sqrt{1-x^2}, z\big) - f_o\big(\rho(z)x, \rho(z)\sqrt{1-x^2}, z\big) \big] \: \D x \Big) \: \D z \\
	&\quad \quad = 0.
\end{align*}
since $f_o(x,y,z) = - f_o(-x, -y, z)$. Hence, for a polynomial $f$ in $x,y,z$ of degree $N$,
\begin{align*}
	\int_\Omega f(x,y,z) \: \genjacw^{(a,0)}(z) \: \D A &= \int_\Omega \Big(f_e(x,y,z) + f_o(x,y,z)\Big) \:  \genjacw^{(a,0)}(z) \: \D A  \\
	&= \int_\Omega f_e(x,y,z) \: \genjacw^{(a,0)}(z) \: \D A \\
	&= \sum_{j=1}^{M}  w_j \: f_e(x_j, y_j, z_j),
\end{align*}
where $M = M_1 M_2$ and $2M_1 - 1 \ge N, 2M_2 - 1 \ge N$.
\end{proof}

\subsection{Obtaining the coefficients for expansion of a function on the spherical cap}\label{subsection:expandingfunctions}

Fix $a \in \R$. Then for any function $f : \Omega \to \R$ we can express $f$ by
\begin{align*}
	f(x,y,z) \approx \sum_{k=0}^N \bigscopNka(x,y,z)^\top \: \bm{f}_k = \bigscopNa(x,y,z)^\top \: \bm{f}
\end{align*}
for N sufficiently large, where $\bigscopNka, \bigscopNa$ is defined in equations (\ref{eqn:OPdefNka}, \ref{eqn:OPdefN0a}, \ref{eqn:OPdefNa}) and where

\begin{align*}
	\bm{f}_k &:= 
		\begin{pmatrix}
			f_{k,k,0} \\
			f_{k,k,1} \\
			\vdots \\
			f_{N,k,0} \\
			f_{N,k,1}
		\end{pmatrix} \in \R^{2(N-k+1)} \quad \text{for } n = 1,2,\dots,N, \quad
	\bm{f}_0 := 
		\begin{pmatrix}
			f_{0,0,0} \\
			\vdots \\
			f_{N,0,0}
		\end{pmatrix} \in \R^{N+1}, \\
	\bm{f} &:= 
		\begin{pmatrix}
			\bm{f}_0 \\
			\vdots \\
			\bm{f}_N
		\end{pmatrix} \in \R^{2(N+1)^2}, \quad \quad
	f_{n,k,i} := \ip< f, \: \scopnkia>_{\scopa} \: \norm{\scopnkia}^{-2}_{\scopa}.
\end{align*}
Recall from equation (\ref{eqn:normscop}) that $\norm{\scopnkia}^2_{\scopa} = \normgenjac^{(a,2k)} \: \pi$. Using the quadrature rule detailed in Section \ref{subsection:quadrule} for the inner product, we can calculate the coefficients $f_{n,k,i}$ for each $n = 0,\dots,N$, $k = 0,\dots,n$, $i = 0,1$: 
\begin{align*}
	f_{n,k,i} &= \frac{1}{2 \: \normgenjac^{(a,2k)} \: \pi} \sum_{j=1}^{M} w_j \big[ f(x_j, y_j, z_j) \scopnkia(x_j, y_j, z_j) +f(-x_j, -y_j, z_j) \scopnkia(-x_j, -y_j, z_j) \big] \\
	&= \frac{1}{M_2 \: \normgenjac^{(a,2k)}} \sum_{j=1}^{M} \big[ f(x_j, y_j, z_j) \scopnkia(x_j, y_j, z_j) +f(-x_j, -y_j, z_j) \scopnkia(-x_j, -y_j, z_j) \big]
\end{align*}
where the quadrature nodes and weights are those from Theorem \ref{Theorem:quadrule}, and $M = M_1 M_2$ with $2M_1 - 1 \ge N, M_2 - 1 \ge N$ (i.e. we can choose $M_2 := N + 1$ and $M_1 := \ceil{\frac{N+1}{2}}$).

\subsection{Function evaluation}\label{subsection:functionevaluation}

For a function $f$, with coefficients vector $\bm{f}$ for expansion in the $\{\scopnki\}$ basis as determined via the method in Section \ref{subsection:expandingfunctions} up to order $N$, we can use the Clenshaw algorithm to evaluate the function at a point $(x,y,z) \in \Omega$ as follows. Let $A_n, B_n, \Dnt, C_n$ be the Clenshaw matrices from Definition \ref{def:clenshawmats}, and define the rearranged coefficients vector $\tilde{\bm{f}}$ via
\begin{align*}
	{\bm{f}}_n &:= 
		\begin{pmatrix}
			f_{n,0,0} \\
			f_{n,1,0} \\
			f_{n,1,1} \\
			\vdots \\
			f_{n,n,0} \\
			f_{n,n,1}
		\end{pmatrix} \in \R^{2(N+1)} \quad \text{for } n = 1,2,\dots,N, \quad
	{\bm{f}}_0 = f_{0,0,0} \in \R, \\
	{\bm{f}} &:= 
		\begin{pmatrix}
			{\bm{f}}_0 \\
			\vdots \\
			{\bm{f}}_N
		\end{pmatrix} \in \R^{(N+1)^2}.
\end{align*}
The trivariate Clenshaw algorithm works similar to the bivariate Clenshaw algorithm introduced in \cite{olver2019triangle} for expansions in the triangle:
\begin{align*}
	\quad &\text{1) } \text{Set } \bm{\xi}_{N+2} = \bold{0}, \: \bm{\xi}_{N+2} = \bold{0}. \\
	\quad &\text{2) } \text{For } n = N:-1:0 \\
	\quad & \quad \quad \quad \text{set } \bm{\xi}_{n}^T = {{\bm{f}}_n}^T - \bm{\xi}_{n+1}^T D^T_n (B_n - G_n(x,y,z)) -  \bm{\xi}_{n+2}^T D^T_{n+1}C_{n+1} \\
	\quad &\text{3) } \text{Output: } f(x,y,z) \approx \bm{\xi}_{0} \: \bigscopta_{0}  = \xi_{0} \: \scopa_0
\end{align*}

\subsection{Calculating non-zero entries of the operator matrices}\label{subsection:Computation-operatormatrices}

The proofs of Theorem \ref{theorem:sparsityofdifferentialoperators} and Lemma \ref{lemma:sparsityofparametertransformationoperators} provide a way to calculate the non-zero entries of the operator matrices given in Definition \ref{def:differentialoperators} and Definition \ref{def:parametertransformationoperators}. We can simply use quadrature to calculate the 1D inner products, which has a complexity of $\bigO(N^2)$. This proves much cheaper computationally than using the 3D quadrature rule to calculate the surface inner products, which has a complexity of $\bigO(N^3)$.

\subsection{Obtaining operator matrices for variable coefficients}\label{subsection:operatorclenshaw}

The Clenshaw algorithm outlined in Section \ref{subsection:functionevaluation} can also be used with Jacobi matrices $J^{(a)}_x, J^{(a)}_y, J^{(a)}_z$ replacing the point $(x,y,z)$. Let $v : \Omega \to \R$ be the function that we wish to obtain an operator matrix $V$ for $v$, so that
\begin{align*}
	v(x,y,z) \: f(x,y,z) = v(x,y,z) \: \bm{f} \: \bigscopta(x,y,z) = (V \bm{f})^\top \: \bigscopta(x,y,z),
\end{align*}
i.e. $V \bm{f}$ is the coefficients vector for the function $v(x,y,z) \: f(x,y,z)$. 

To this end, let $\tilde{\bm{v}}$ be the coefficients for expansion up to order $N$ in the $\{\scopnki\}$ basis of $v$ (rearranged as in Section \ref{subsection:functionevaluation} so that $v(x,y,z) = {\bm{v}}^\top \: \bigscopta(x,y,z)$). Denote $X := (J_x^{(a)})^\top$, $Y := (J_y^{(a)})^\top$, $Z := (J_z^{(a)})^\top$. The operator $V$ is then the result of the following:
\begin{align*}
	\quad &\text{1) } \text{Set } \bm{\xi}_{N+2} = \bold{0}, \: \bm{\xi}_{N+2} = \bold{0}. \\
	\quad &\text{2) } \text{For } n = N:-1:0 \\
	\quad & \quad \quad \quad \text{set } \bm{\xi}_{n}^T = {\tilde{\bm{v}}_n}^T - \bm{\xi}_{n+1}^T D^T_n \big(B_n - G_n(X, Y, Z)\big) -  \bm{\xi}_{n+2}^T D^T_{n+1}C_{n+1} \\
	\quad &\text{3) } \text{Output: } V(X, Y, Z) \approx \bm{\xi}_{0} \: \bigscopta_{0}  = \xi_{0} \: \scopa_0
\end{align*}
where at each iteration, $\bm{\xi}_n$ is a vector of matrices.

\section{Examples on spherical caps with zero Dirichlet conditions}\label{Section:Examples}

We now demonstrate how the sparse linear systems constructed as above can be used to efficiently solve PDEs with zero Dirichlet conditions on the spherical cap defined by $\Omega$. We consider Poisson, inhomogeneous variable coefficient Helmholtz equation and the Biharmonic equation, as well as a time dependent heat equation, demonstrating the versatility of the approach.

\subsection{Poisson}

% FIGURES

\begin{figure}[t]
	\begin{subfigure}{0.5\textwidth}
	\includegraphics[scale=0.25]{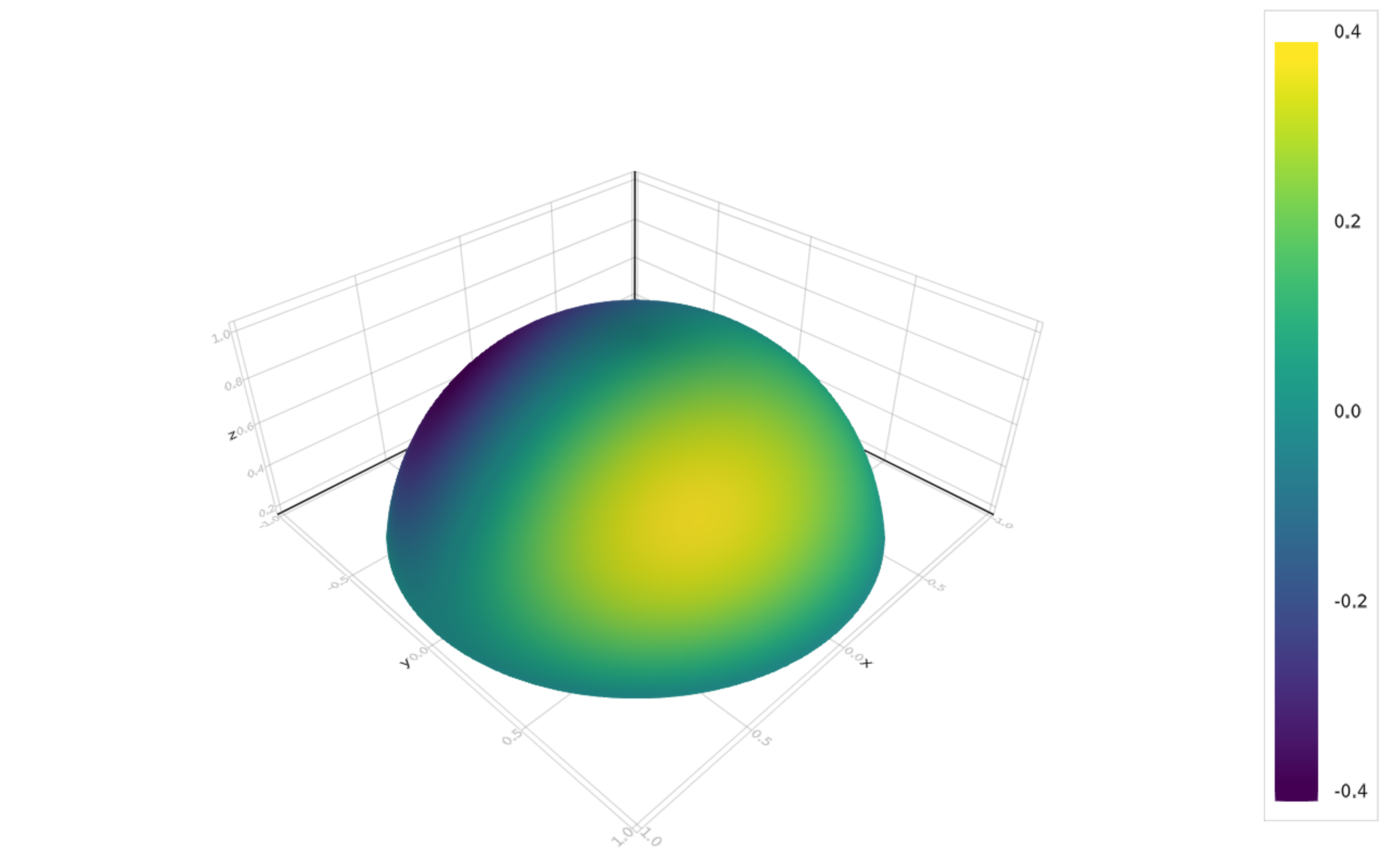}
	\centering
	%\label{fig:solution-poisson}
	\end{subfigure}
	\begin{subfigure}{0.5\textwidth}
	\centering
	\includegraphics[scale=0.3]{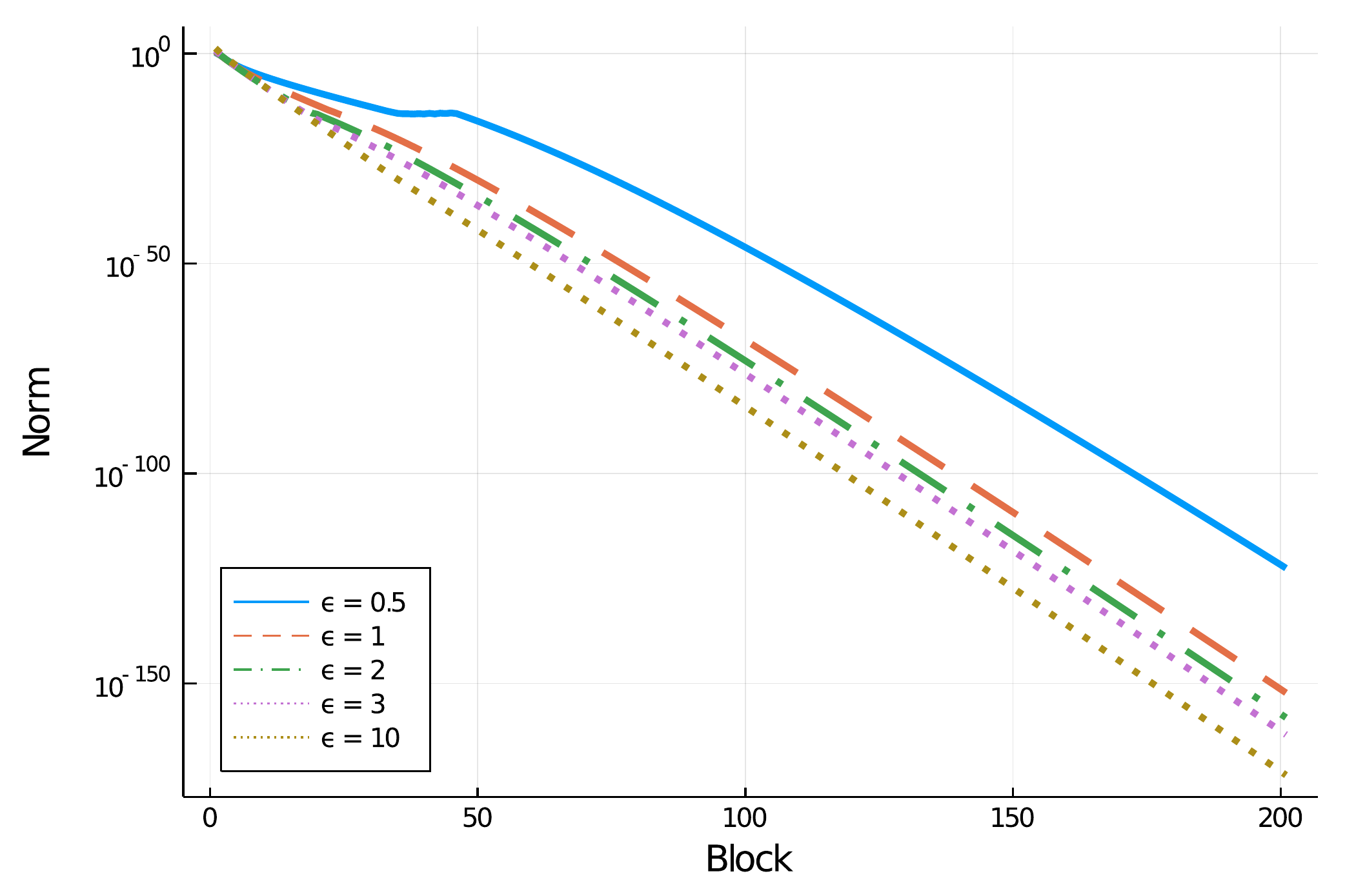}
        	%\label{fig:solutionblocknorms-poisson}
	\end{subfigure}
	\caption{Left: The computed solution to $\Delta u = f$ with zero boundary conditions with $f(x,y,z) = - 2 e^x y z (2+x) + \genjacw^{(1,0)}(z) e^x (y^3 + z^2 y - 4xy - 2y)$. Right: The norms of each block of the computed solution of the Poisson equation with right hand side function $f(x,y,z) = \norm{\xvec - (\epsilon + 1 / \sqrt{3}) \; (1,1,1)^\top }$ for different $\epsilon$ values. This indicates spectral convergence.}
	\centering
	\label{fig:poisson}
\end{figure}

\begin{figure}[t]
	\begin{subfigure}{0.5\textwidth}
	\includegraphics[scale=0.25]{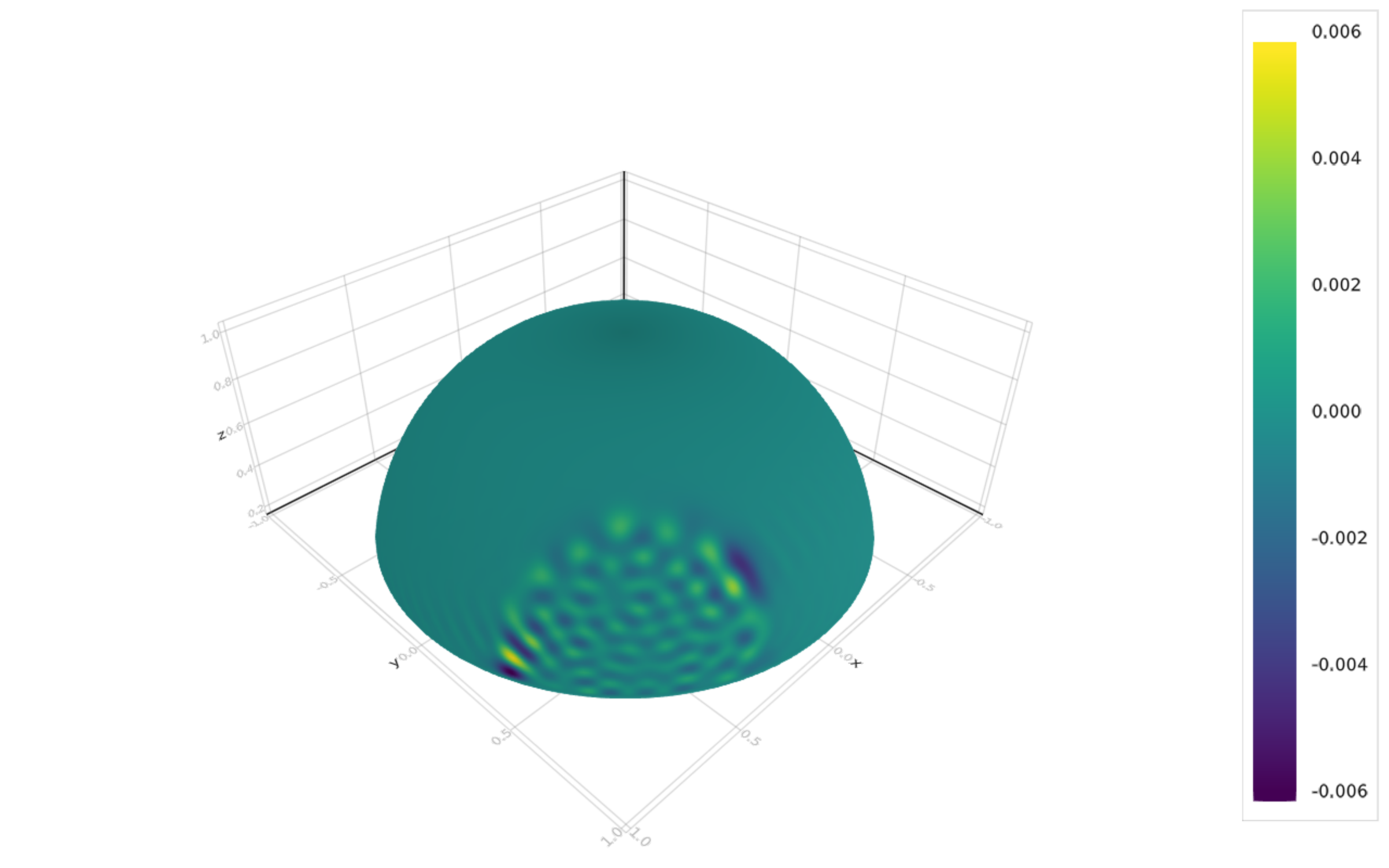}
	\centering
	%\label{fig:solution-poisson}
	\end{subfigure}
	\begin{subfigure}{0.5\textwidth}
	\centering
	\includegraphics[scale=0.3]{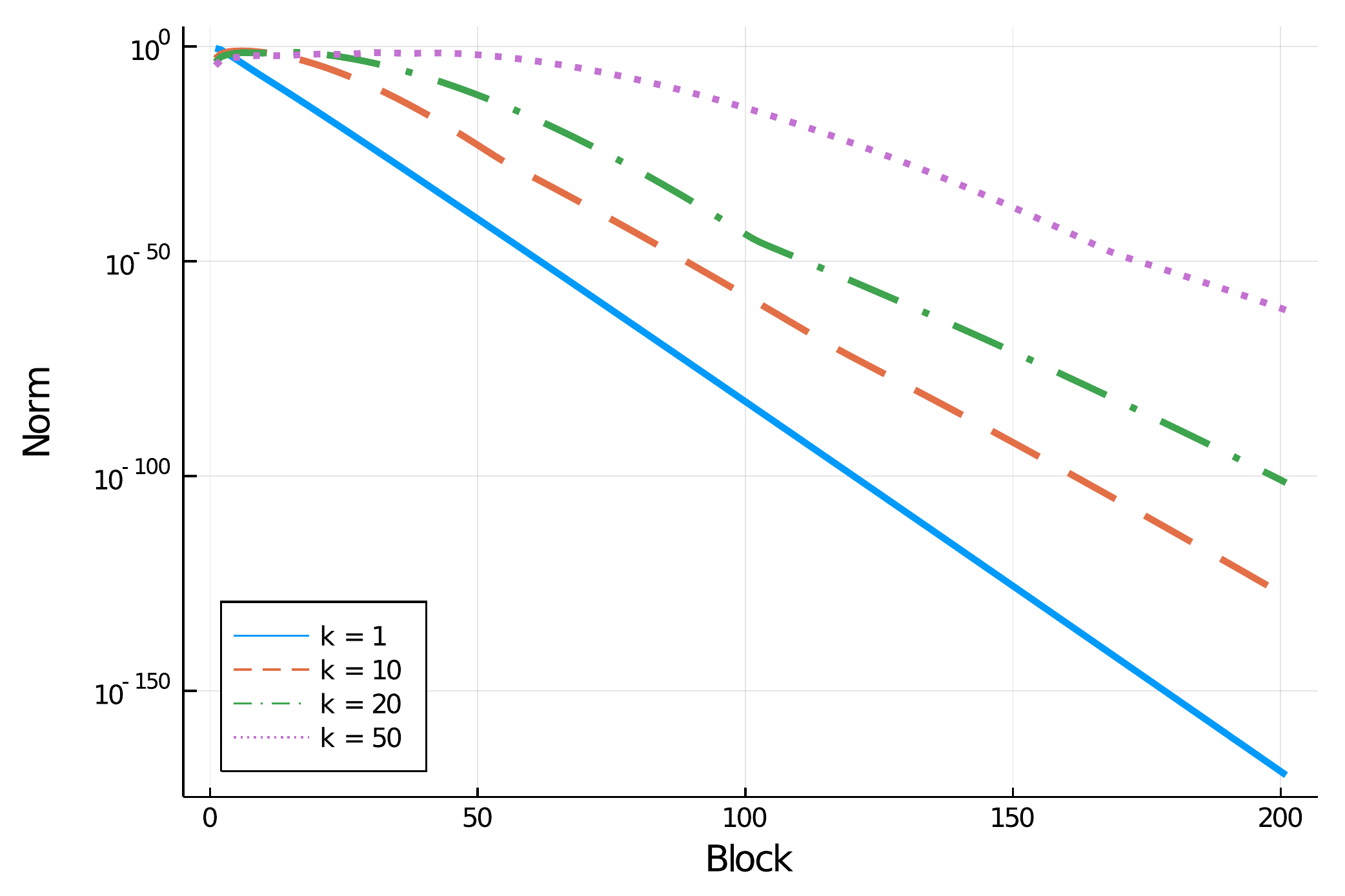}
        	%\label{fig:solutionblocknorms-poisson}
	\end{subfigure}
	\caption{Left: The computed solution to $\Delta u + k^2 \: v \: u = f$ with zero boundary conditions with $f(x,y,z) = y e^x (z - \alpha) $, $v(x,y,z) = 1 - (3(x-x_0)^2 + 5(y-y_0)^2 + 2(z-z_0)^2)$ where $(x_0, z_0) := (0.7, 0.2)$, $y_0 := \sqrt{1 - x_0^2 - z_0^2}$ and $k = 100$. Right: The norms of each block of the computed solution of the Helmholtz equation with the right hand side function $f(x,y,z) = 1$ and the same function $v(x,y,z)$, for various $k$ values. This indicates spectral convergence.}
	\centering
	\label{fig:helmholtz}
\end{figure}

\begin{figure}[t]
	\centering % <-- added
	\includegraphics[scale=0.35]{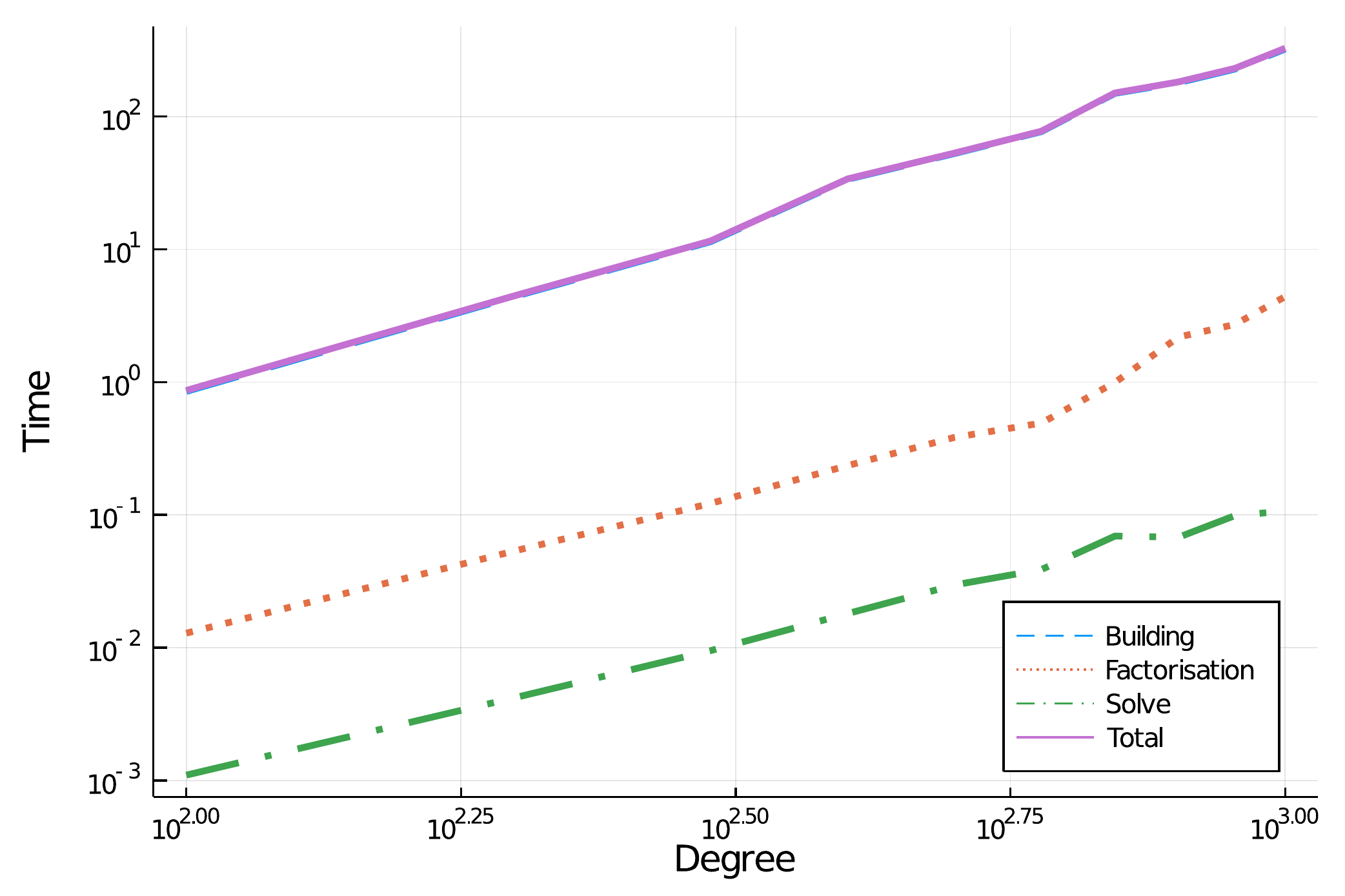}
	\caption{Time in seconds to build and solve the system $\big[\DeltaS + v(x,y,z) \big] \: u(x,y,z) = f(x,y,z)$, for a rotationally invariant $v(x,y,z) = v(z)$. This demonstrates that the approach is roughly of order $\bigO(N^2)$, where $N$ is the degree to which we approximate the solution. Here, we used $f = -2e^{x}yz(2+x) + (z - \alpha) e^{x} (y^3 + z^2 y - 4xy - 2y)$ and $v(x,y,z) = v(z) = \cos(z)$.}
	\centering
	\label{fig:complexity}
\end{figure}

\begin{figure}[t]
	\begin{subfigure}{0.5\textwidth}
	\includegraphics[scale=0.15]{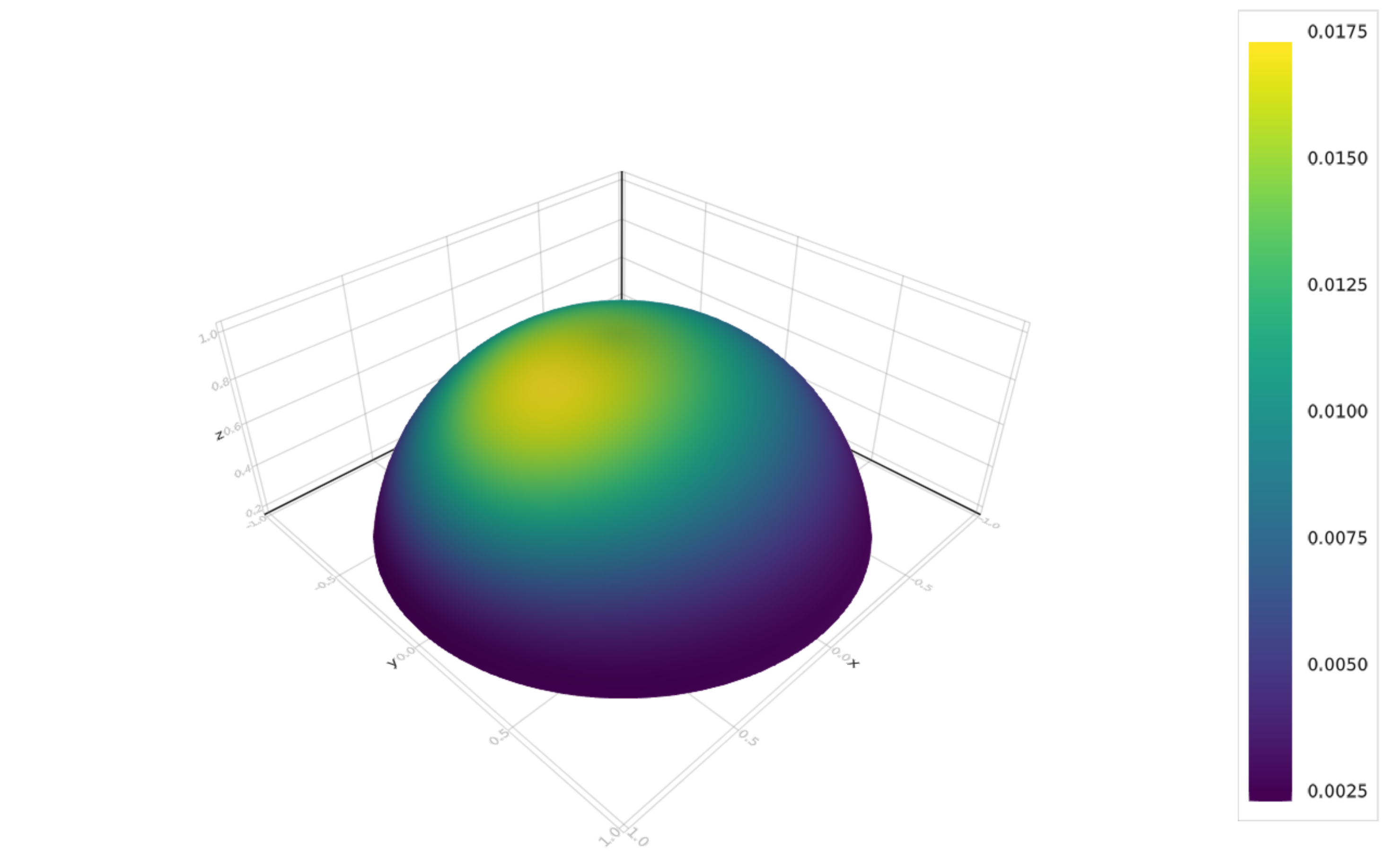}
	\centering
	%\label{fig:solution-biharmonic}
	\end{subfigure}
	\begin{subfigure}{0.5\textwidth}
	\includegraphics[scale=0.3]{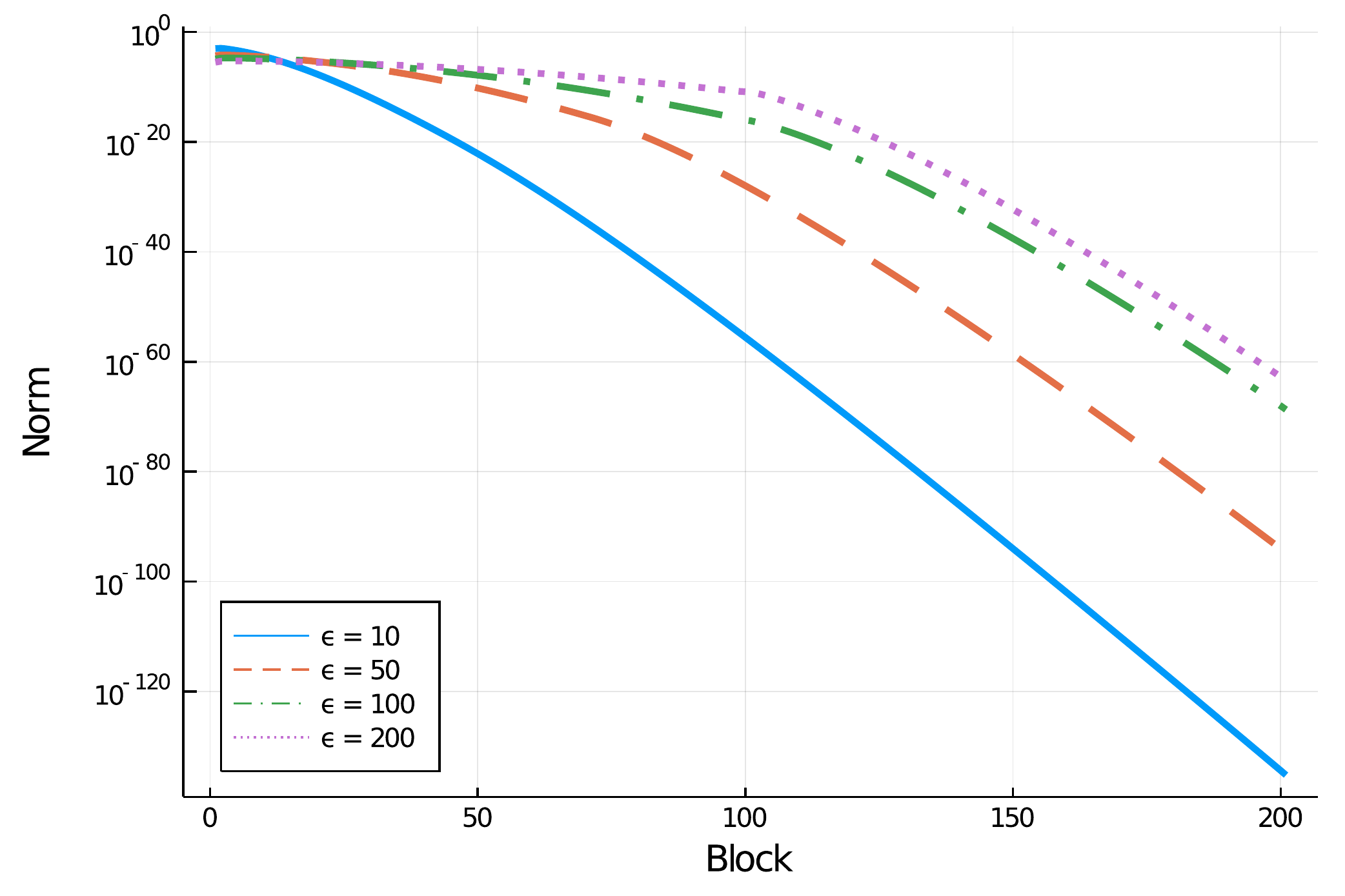}
	\centering
        	%\label{fig:solutionblocknorms-biharmonic}
	\end{subfigure}
	\caption{Left: The computed solution to $\Delta^2 u = f$ with zero Dirichlet and Neumann boundary conditions with $f(x,y,z) = (1 + \text{erf}(5(1 - 10((x - 0.5)^2 + y^2)))) \rho(z)^2$. Right: The norms of each block of the computed solution of the biharmonic equation with the right hand side function $f(x,y,z) = \exp(-\epsilon((x-x_0)^2 + (y-y_0)^2 + (z-z_0)^2))$ where  $(x_0, z_0) := (0.7, 0.2)$, $y_0 := \sqrt{1 - x_0^2 - z_0^2}$, for various $\epsilon$ values. This demonstrates algebraic convergence.}
	\centering
	\label{fig:biharmonic}
\end{figure}
% END FIGURES

The Poisson equation is the classic problem of finding $u(x,y,z)$ given a function $f(x,y,z)$ such that:
\begin{align}
	\begin{cases}
\DeltaS u(x,y,z) = f(x,y,z) &\quad \text{in } \Omega \\
		u(x,y,z) = 0& \quad \text{on } \partial \Omega
	\end{cases}
	\label{eqn:poisson}
\end{align}
noting the imposition of zero Dirichlet boundary conditions on $u$.

We can tackle the problem as follows. Choose an $N \in \N$ large enough for the problem, and denote the coefficient vector for expansion of $u$ in the $\bigWNi$ OP basis up to degree $N$ by $\mathbf{u}$, and the coefficient vector for expansion of $f$ in the $\bigscopNi$ OP basis up to degree $N$ by $\mathbf{f}$. Since $f$ is known, we can obtain $\mathbf{f}$ using the quadrature rule in Section \ref{subsection:expandingfunctions}. In matrix-vector notation, our system hence becomes:
\begin{align*}
    \Delta_W^{(1)} \mathbf{u} = \mathbf{f}
\end{align*}
which can be solved to find $\mathbf{u}$.
In Figure \ref{fig:poisson} we see the solution to the Poisson equation with zero boundary conditions given in (\ref{eqn:poisson}) in the disk-slice $\Omega$. In Figure \ref{fig:poisson} we also show the norms of each block of calculated coefficients of the approximation for four right-hand sides of the Poisson equation with $N = 200$, that is, $(N+1)^2 = 40,401$ unknowns. The right hand sides we choose here are given by
\begin{align*}
	f(x,y,z) &= \norm{\Big(x - (\epsilon + 1 / \sqrt{3}), \: y - (\epsilon + 1 / \sqrt{3}), \: z - (\epsilon + 1 / \sqrt{3})\Big)^\top}
\end{align*}
for differing choices of $\epsilon$ -- this parameter serves to alter the distance from which we would have a singularity. In the plot, a \enquote{block} is simply the group of coefficients corresponding to OPs of the same degree, and so the plot shows how the norms of these blocks decay as the degree of the expansion increases. Thus, the rate of decay in the coefficients is a proxy for the rate of convergence of the computed solution: as typical of spectral methods, we expect the numerical scheme to converge at the same rate as the coefficients decay. We see that we achieve spectral convergence for these examples.

\subsection{Inhomogeneous variable-coefficient Helmholtz}

Find $u(x,y)$ given functions $v$, $f : \Omega \to \R$ such that:
\begin{align}
	\begin{cases}
    		\DeltaS u(x,y,z) + k^2 \: v(x,y,z) \; u(x,y,z) = f(x,y,z) &\quad \text{in } \Omega \\
		u(x,y,z) = 0 &\quad \text{on } \partial \Omega
	\end{cases}
	\label{eqn:helmholtz}
\end{align}
where $k \in \R$, noting the imposition of zero Dirichlet boundary conditions on $u$.

We can tackle the problem as follows. Denote the coefficient vector for expansion of $u$ in the $\bigWNi$ OP basis up to degree $N$ by $\mathbf{u}$, and the coefficient vector for expansion of $f$ in the $\bigscopNi$ OP basis up to degree $N$ by $\mathbf{f}$. Since $f$ is known, we can obtain  the coefficients $\mathbf{f}$ using the quadrature rule in Section \ref{subsection:expandingfunctions}. 

Define $X := (J_x^{(0)})^\top$, $Y := (J_y^{(0)})^\top$, $Z := (J_z^{(0)})^\top$. We can obtain the matrix operator for the variable-coefficient function $v(x,y,z)$ by using the Clenshaw algorithm with matrix inputs as the Jacobi matrices $X, Y, Z$, yielding an operator matrix of the same dimension as the input Jacobi matrices a la the procedure introduced in \cite{olver2019triangle}. We can denote the resulting operator acting on coefficients in the $\bigscopNo$ space by $v(X, Y, Z)$. In matrix-vector notation, our system hence becomes:
\begin{align*}
    (\Delta_W^{(1)} + k^2 \:T^{(0)\to(1)} \: V \: T_W^{(1)\to(0)}) \: \mathbf{u} = \mathbf{f}
\end{align*}
which can be solved to find $\mathbf{u}$. We can see the sparsity and structure of this matrix system in Figure \ref{fig:sparsity} with $v(x,y,z) = zxy^2$ as an example. In Figure \ref{fig:helmholtz} we see the solution to the inhomogeneous variable-coefficient Helmholtz equation with zero boundary conditions given in (\ref{eqn:helmholtz}) in the spherical cap $\Omega$, with $f(x,y,z) = y e^x \genjacw^{(1,0)}(z)$, $v(x,y,z) = 1 - (3(x-x_0)^2 + 5(y-y_0)^2 + 2(z-z_0)^2)$ where $(x_0, z_0) := (0.7, 0.2)$, $y_0 := \sqrt{1 - x_0^2 - z_0^2}$ and $k = 100$. In Figure \ref{fig:helmholtz} we also show the norms of each block of calculated coefficients for the approximation of the solution to the inhomogeneous variable-coefficient Helmholtz equation with various $k$ values. Here, we use $N = 200$, that is, $(N+1)^2 = 40,401$ unknowns. Once again, the rate of decay in the coefficients is a proxy for the rate of convergence of the computed solution, and we see that we achieve spectral convergence.

In Figure~\ref{fig:complexity} we plot the time taken\footnote{measured using the \enquote{@belapsed} macro from the BenchmarkTools.jl package \cite{BenchmarkTools.jl-2016} in Julia.} to construct the operator for $\DeltaS + v(x,y,z)$, with a rotationally invariant $v(x,y,z) = v(z) = \cos z$, and solve a zero boundary condition Helmholtz problem. The plot demonstrates that as we increase the degree of approximation $N$, we achieve a complexity of an optimal $\bigO(N^2)$.

What about other boundary conditions? One simple extension is the case where the value on the boundary takes that of a function depending only on $x$ and $y$, i.e. $c = c(x,y)$. In this case, the problem
\begin{align*}
	\begin{cases}
    		\DeltaS u(x,y,z) + k^2 \: v(x,y,z) \; u(x,y,z) = f(x,y,z) &\quad \text{in } \Omega \\
		u(x,y,z) = c(x,y) &\quad \text{on } \partial \Omega
	\end{cases}
\end{align*}
is equivalent to letting $u(x,y,z) = \tilde{u}(x,y,z)+ c(x,y)$ and solving
\begin{align*}
	\begin{cases}
    		\DeltaS \tilde{u}(x,y,z) + k^2 \: v(x,y,z) \; \tilde{u}(x,y,z) = f(x,y,z) - k^2 \: v(x,y,z) \: c(x,y) - \DeltaS \: c(x,y)  \quad \text{in } \Omega \\
		\tilde{u}(x,y,z) = 0 \quad \text{on } \partial \Omega
	\end{cases}
\end{align*}
for $\tilde u$. This new problem is then a zero boundary condition Helmholtz problem with right hand side 
\begin{align*}
	g(x,y,z) &:= f(x,y,z) - k^2 \: v(x,y,z) \: c(x,y) - \DeltaS \: c(x,y)
\end{align*}
for $(x,y,z) \in \Omega$. Notice that the spherical Laplacian applied to $c(x,y)$, expanded in the $\bigscopN^{(1)}$ basis with coefficients vector $\vec c = (c_{n,k,i})$, is just
\begin{align*}
	\DeltaS \: c(x,y) = {1 \over \rho(z)^2} \sum_{n=0}^N \sum_{i=0}^1 c_{n,n,i} \: \pmpxm{\theta}{2} \ch_{n,i}(\theta) = - {1 \over \rho(z)^2} \sum_{n=0}^N \sum_{i=0}^1 n^2 \: c_{n,n,i} \: \ch_{n,i}(\theta)
\end{align*}
since the coefficients $\{c_{n,k,i}\}$ for such a function are zero for $k < n$ due to the dependence on $x$ and $y$ only, which are precisely the Fourier coefficients of $c(\cos \theta, \sin \theta)$. Thus, since the function $c(x,y)$ is known, it is simple to evaluate $\pmpxm{\theta}{2} c(x,y)$ and hence one can obtain the coefficients for the expansion of $g(x,y,z)$ in the $\bigscopN^{(1)}$ basis in the usual manor.

\subsection{Biharmonic equation}

Our last erxample is the biharmonic equation: find $u(x,y,z)$ given a function $f(x,y,z)$ such that:
\begin{align}
	\begin{cases}
    		\DeltaS^2 u(x,y,z) = f(x,y,z) &\quad \text{in } \Omega \\
		u(x,y,z) = 0, \quad \frac{\partial u}{\partial n}(x,y,z) = \grad_S \: u(x,y,z) \cdot \unitvec{n}(x,y,z) = 0 &\quad \text{on } \partial \Omega
	\end{cases}
	\label{eqn:biharmonic}
\end{align}
where $\DeltaS^2$ is the Biharmonic operator, noting the imposition of zero Dirichlet and Neumann boundary conditions on $u$. For clarity, we reiterate that the unit normal vector in this sense is simply $\unitvec{n}(x,y,z) = \unitvec{n}(\vec x) := {\vec x \over \norm{\vec x}} = \vec x$ (see Section \ref{subsection:furtherdiffoperators}). In Figure \ref{fig:biharmonic} we see the solution to the Biharmonic equation (\ref{eqn:biharmonic}) in the spherical cap $\Omega$. In Figure \ref{fig:biharmonic} we also show the norms of each block of calculated coefficients of the approximation for four more complex right-hand sides of the biharmonic equation with $N = 200$, that is, $(N+1)^2 = 40,401$ unknowns. Once again, the rate of decay in the coefficients is a proxy for the rate of convergence of the computed solution, and we see that we achieve exponential convergence for these more complex functions.

\section{Conclusions}

We have shown that trivariate orthogonal polynomials can lead to sparse discretizations of general linear PDEs on spherical cap domains, with Dirichlet boundary conditions on the $z = \alpha \in (0,1)$ boundary. We have provided a detailed practical framework for the application of the methods described for quadratic surfaces of revolution \cite{olver2020orthogonal}, by utilising the non-classical 1D OPs on the interval $[\alpha, 1]$ with the weight $(z - \alpha)^a \: (1-z^2)^{b/2}$ defined for the disk-slice case \cite{snowball2019sparse}. Generalisation to spherical bands ($\alpha \leq z \leq \beta$) is straightforward.  This work thus forms a building block in developing an $hp-$finite element method to solve PDEs on the sphere by using spherical band and spherical cap shaped elements.

This work also serves as a stepping stone to constructing similar methods to solve partial differential equations on other 3D sub-domains of the sphere---it is clear from the construction in this paper that discretizations of spherical gradients and Laplacian's are sparse on other suitable sub-components of the sphere. The resulting sparsity in high-polynomial degree discretizations presents an attractive alternative to methods based on bijective mappings (e.g., \cite{DGShallowWater,FEMShallowWater,boyd2005sphere}). Constructing these sparse spectral methods for surface PDEs on  spherical triangles is future work, and has applications in weather prediction \cite{staniforth2012horizontal}, though it is not yet clear how to directly construct the necessary orthogonal polynomials. 

The next stage is to develop an orthogonal basis for the tangent space of the spherical cap (or band), and obtain sparse differential operators for gradient, divergence etc. On the complete sphere, the vector spherical harmonics that form the orthogonal basis are simply the gradients and perpendicular gradients of the scalar spherical harmonics \cite{barrera1985vector} which has been used effectively for solving PDEs on the sphere \cite{vasil2019tensor,lecoanet2019tensor} -- however, we do not have that luxury for the spherical cap or band, and hence the choice of basis will not be as straightforward.

\bibliography{spherical-caps}

\end{document}